\documentclass[preprint, 11pt]{elsarticle}
\usepackage{lineno}
\pdfoutput=1

\usepackage{amsmath}
\usepackage{amssymb}
\usepackage{mathtools}
\usepackage{mathrsfs}
\usepackage{color}
\usepackage{xcolor}
\usepackage{graphicx}
\usepackage[percent]{overpic}
\usepackage{url}
\usepackage{caption}
\usepackage{subcaption}
\usepackage{enumerate}
\usepackage{overpic}
\usepackage{amsthm}
\usepackage{moreverb}
\usepackage{multirow}
\usepackage{multicol}
\usepackage[pdftex,colorlinks,bookmarksopen,bookmarksnumbered,citecolor=red,urlcolor=red]{hyperref}
\usepackage{mwe}
\usepackage{graphbox}
\usepackage{textcomp}
\usepackage{tabularx}
\usepackage{algorithm}
\usepackage{algorithmicx}
\usepackage{algpseudocode}
\usepackage{tikz}
\newtheorem{theorem}{Theorem}[section]
\newtheorem{lemma}[theorem]{Lemma}

\def\b{{\bm b}}

\def\x{{\bm x}}

\def\0{\boldsymbol{0}}

\def\calsf{\mathbf{\mathcal{S}}}
\def\caldf{\mathbf{\mathcal{D}}}

\def\calvf{\mathbf{\mathcal{V}}}
\def\calif{\mathbf{\mathcal{I}}}

\def\calmf{\mathbf{\mathcal{M}}}
\def\calpf{\mathbf{\mathcal{P}}}

\def\caltf{\mathbf{\mathcal{T}}}
\def\calkf{\mathbf{\mathcal{K}}}
\def\calwf{\mathbf{\mathcal{W}}}
\def\calaf{\mathbf{\mathcal{A}}}
\def\calbf{\mathbf{\mathcal{B}}}

\def \bV{{\mathbf V}}

\def \b0{{\mathbf 0}}

\newcommand{\bm}[1]{\mbox{\boldmath{$#1$}}}
\DeclarePairedDelimiter\abs{\lvert}{\rvert}%
\newcommand\dual[1]{\left\langle#1\right\rangle}%
\definecolor{ForestGreen}{RGB}{34,139,34}
\newcommand\norm[1]{\left\lVert#1\right\rVert}

\usepackage{verbatim}
\usepackage{graphicx}
\usepackage{epstopdf}

\newtheorem*{remark}{Remark}

\begin{document}

\begin{frontmatter}

\title{Schur complement domain decomposition for 3D fluid - 2D plate interaction system: analysis and preconditioning}





\author{Lander Besabe}
\ead{lbesabe@clemson.edu}

\author{Hyesuk Lee}
\ead{hklee@clemson.edu}

\address{School of Mathematical and Statistical Sciences, Clemson University, Clemson, SC 29634-0975, USA}

\begin{abstract}

Fluid–structure interaction (FSI) problems involving three-dimensional fluids coupled with thin elastic plates arise in many engineering applications and present significant computational challenges due to the strong coupling across the fluid–structure interface. In this work, we develop a Schur complement domain decomposition method for a 3D fluid–2D plate interaction problem in which the fluid is modeled by the unsteady Stokes equations and the structure by a reformulated Kirchhoff plate model. Starting from a mixed finite element discretization with a Lagrange multiplier (LM) enforcing the interface conditions, we derive an interface Schur complement formulation that decouples the fluid and plate subproblems while preserving the strong interface coupling without requiring subiterations. We then analyze the conditioning of the resulting Schur complement system matrix and show how its condition number deteriorates under mesh refinement. Motivated by this analysis, we introduce an interface-based preconditioner and establish theoretical bounds showing that the proposed preconditioner significantly improves the conditioning of the resulting system. Numerical experiments verify the expected spatial and temporal convergence rates, demonstrate significant reductions in condition numbers and GMRES iteration counts for the proposed preconditioner compared with both the unpreconditioned system and a block Jacobi preconditioner, and illustrate the robustness of the proposed method for challenging added-mass regimes.

\end{abstract}

\begin{keyword}
Fluid-structure interaction \sep
Mixed finite element methods \sep
Lagrange multiplier methods \sep
Stokes equations \sep
Kirchhoff-Love plate \sep
Domain decomposition \sep
Schur complement method
\end{keyword}

\end{frontmatter}

\section{Introduction} \label{sec:intro}

Fluid-structure interaction (FSI) problems couple fluid and structural dynamics through interface conditions that enforce continuity of velocity and stress force. The numerical simulation of such systems is computationally demanding due to the strong coupling between the governing equations, the need to accurately enforce the interface conditions \cite{Causin-Nobile2005, Richter2017}, and the large computational cost. Such problems arise in a variety of scientific and engineering settings, including hemodynamics \cite{Quaini2011, Duca2025, Quarteroni2000} and aeroelasticity \cite{Zheng2023, Svacek2008}. In many applications, the structural component is thin relative to the surrounding fluid domain and may be represented by a lower-dimensional model, leading to coupled systems consisting of a three-dimensional fluid and a two-dimensional elastic structure. Such formulations provide an efficient description of the underlying physics while introducing additional challenges in the treatment of the fluid-structure interface \cite{BesabeLee2026}. Consequently, the development of stable and efficient numerical methods for 3D fluid-2D plate interaction problems continues to be an important area of research, particularly in the development of partitioned approaches capable of handling strong interface coupling.

Domain decomposition methods have long been employed as an efficient framework for the numerical simulation of partial differential equations (PDEs) by decomposing a global problem into smaller subproblems coupled through interface conditions. The earliest developments in domain decomposition techniques \cite{QuarteroniValli1999, ToselliWidlund2005} laid the foundations for interface-based methods and Schur complement formulations through Steklov-Poincar\'{e} operators. In the context of FSI problems, these techniques translate to treating the fluid and structure subproblems independently on their respective domains, while enforcing coupling conditions on the interface. In \cite{Deparis2006}, a partitioned approach for the FSI problem was developed through the introduction of Steklov-Poincar\'{e} operators and interface equations. Subsequent developments led to the construction of Robin-Robin and Krylov-based interface solvers \cite{Badia2009, Nobile2008}, as well as more recent Schur-complement formulations that enable the decoupling of monolithic FSI systems while preserving strong interface coupling \cite{de_Castro2025}.

Beyond Schur complement formulations, a broad class of domain decomposition techniques has been developed for partitioned FSI simulations. Classical Dirichlet-Neumann iterations \cite{Kuttler2006} remain among the most widely used coupling strategies due to their modularity, although they are known to suffer from the added-mass effect for incompressible flows \cite{Causin-Nobile2005}. To improve robustness and convergence, Robin transmission conditions \cite{Badia2008}, optimized Robin-Robin procedures \cite{GerardoGiorda2010}, and interface quasi-Newton methods \cite{Spenke2023} have been proposed, yielding substantially improved convergence properties and greater flexibility in the treatment of interface conditions.

Compared to classical FSI problems involving fluid and structure of the same dimension, the literature on numerical methods for 3D fluid–2D plate interaction systems remains relatively limited. Earliest works on 3D fluid-2D plate FSI \cite{Avalos2014, Cheng2008} employed monolithic formulations, requiring $H^2$-conforming discretization for the plate displacement due to the presence of the biharmonic operator. More recently, attention has expanded to reduced-dimensional plate models coupled with viscous fluids. For example, in \cite{Brandt2026}, a finite element method for the interaction of an incompressible viscous fluid with a fully averaged poroelastic Kirchhoff plate was developed and analyzed. Other recent works \cite{Geredeli_Kunwar_Lee2024, Geredeli2026, Avalos2026} explored partitioned approaches for coupling the fluid and plate subproblems  using fixed point iterations at each time step. In previous work \cite{BesabeLee2026}, we utilized the same partitioned algorithm for a linear 3D fluid-2D plate interaction problem by reformulating the fourth-order plate equation as a coupled system of second-order PDEs. This approach eliminated the need for $H^2$-conforming elements, allowing the use of standard $C^0$-conforming finite element spaces and providing greater flexibility in the construction of the finite element discretization.

In this work, we develop a Schur complement domain decomposition method for the numerical approximation of a 3D fluid-2D plate interaction problem. Starting from a coupled finite element formulation, we derive an interface Schur complement system that enables the fluid and structural subproblems to be solved independently without subiterations while preserving the strong interface coupling. We establish analytical estimates for the condition number of the resulting Schur complement system matrix and show that it may deteriorate with mesh refinement. To address this issue, we introduce a preconditioner based on the structure of the interface operator and derive theoretical bounds for the condition number of the preconditioned system.

The remainder of the paper is organized as follows. In Sec.~\ref{sec:gov_eq}, we briefly recall the 3D fluid-2D plate FSI model and its reformulation. We present the fully discrete problem and the partitioned approach through the Schur complement system in Sec.~\ref{sec:fully-disc}. We proceed by examining the condition number of the Schur complement system matrix in Sec.~\ref{sec:cond_num_schurc}, along with a proposed preconditioner, and derive the resulting condition number of the preconditioned system. In Sec.~\ref{sec:num_res}, two numerical tests are provided, which demonstrate the validity and robustness of our technique. We draw some conclusions in Sec.~\ref{sec:conclusions}.

\section{Governing Equations}\label{sec:gov_eq}

We consider a fluid in a bounded domain $\Omega_f\subset\mathbb{R}^3$ with sufficiently smooth boundary $\partial\Omega_f = \bar{S}\cup\bar{\Omega}_p$, with $\Omega_p\cap S = \emptyset$ where $S$ is the rigid portion of the boundary and $\Omega_p$ is the portion supporting the elastic plate. We assume that $\Omega_p$ is a fixed planar surface (not necessarily horizontal), embedded in $\mathbb{R}^3$.

The dynamics of the FSI system is governed by the unsteady Stokes equations in $\Omega_f$ and either the "Euler-Bernoulli" (no rotation) or "Kirchhoff" (with rotation) equations in $\Omega_p$. The FSI problem reads: for a final time $T>0$, find velocity $\bm{u} = (u_1, u_2, u_3)^T:\Omega_f\times(0,T)\rightarrow\mathbb{R}^3$, pressure $p:\Omega_f\times(0,T)\rightarrow\mathbb{R}$, and plate displacement in the normal direction $w:\Omega_p\times(0,T)\rightarrow\mathbb{R}$ such that
\begin{align} \label{eq:momentum}
    &\rho_f\partial_t\bm{u} - 2\nu_f~ \nabla \cdot D(\bm{u}) + \nabla p = \bm{f} &&\text{in }\Omega_f\times(0,T), \\
    &\nabla\cdot\bm{u} = 0, &&\text{in }\Omega_f\times(0,T),\label{eq:continuity} \\
    &\bm{u} = \bm{0} &&\text{on }S\times(0,T),\label{eq:fluid_bc} \\
    &\rho_p\partial_{tt}w - \rho\partial_{tt}\Delta w + \Delta^2 w = -(\bm{\sigma}_f\bm{n})\cdot\bm{n} &&\text{in }\Omega_p\times(0,T), \label{eq:plate_eq} \\
    &w = \Delta w = 0 &&\text{on }\partial\Omega_p\times(0,T), \label{eq:hingedbc} \\
    &\bm{u} = \partial_t{w}~\bm{n} &&\text{on }\Omega_p\times(0,T), \label{eq:interface}
\end{align}
where $\partial_t \bm{u} = \frac{\partial \bm{u}}{\partial t}$, $\partial_{tt}w = \frac{\partial^2 w}{\partial t^2}$, $\rho_f$ is the fluid density, $\nu_f$ denotes the kinematic viscosity of the fluid, $\rho_p$ is the plate surface density, $\rho$ is the rotational inertia parameter, $\bm{n}$ is the outward unit normal vector to $\Omega_f$, and $D(\cdot)$ is the strain rate tensor given by $D(\bm{v}) = \frac{1}{2}(\nabla\bm{v} + \nabla\bm{v}^T)$. The fluid Cauchy stress tensor is given by
\begin{equation} \label{eq:stress_tensor}
    \bm{\sigma}_f = 2\nu_f D(\bm{u}) - p\bm{I}.
\end{equation}
The system is supplemented with the initial conditions
\begin{equation} \label{eq:IC}
    (\bm{u}, w, \partial_t w) = (\bm{u}_0, w_0, w_{t0}).
\end{equation}

In much of the existing literature on 3D fluid - 2D plate interaction, e.g., \cite{Chueshov2013, Avalos2014, Geredeli_Kunwar_Lee2024, Brandt2026}, the plate is assumed, without loss of generality, to occupy the horizontal plane $z=0$. Under this assumption, the outward unit normal to the interface is simply $\bm{n}=\bm{e}_3=(0,0,1)^T$, and the coupling conditions are written in terms of the vertical component of the fluid velocity and stress, such as $\bm{u}=\partial_t w\bm{e}_3$ and the normal traction $(\bm{\sigma}_f \bm{e}_3)\cdot \bm{e}_3$. In contrast, we formulate the problem in a more general setting by allowing the plate to occupy an arbitrary fixed planar surface in $\mathbb{R}^3$. Consequently, the interface conditions are expressed using the outward unit normal vector $\bm{n}$, namely $\bm{u}=\partial_t w\bm{n}$ and the plate load $-(\bm{\sigma}_f\bm{n})\cdot\bm{n}$.

Note that in a number of previous works \cite{Geredeli_Kunwar_Lee2024, Avalos2014, Avalos2026}, the plate displacement $w$ is assumed to be in $H^2(\Omega_p)$ in the weak formulation. Following \cite{BesabeLee2026}, we reformulate the fourth order plate equations \eqref{eq:plate_eq}-\eqref{eq:hingedbc} into coupled second order equations by introducing an auxiliary variable $z$ such that 
\begin{equation}\label{eq:poisson_variable}
    z = -\Delta w,
\end{equation}
which corresponds to the plate curvature (or bending variable). This transforms \eqref{eq:plate_eq}-\eqref{eq:hingedbc} into the system of second-order PDEs given by
\begin{align}
    &\rho_p\partial_{tt}w + \rho \partial_{tt}z - \Delta z = -(\bm{\sigma}_f\bm{n})\cdot\bm{n} &&\text{in }\Omega_p\times(0,T), \label{eq:plate_eq_order2}\\
    &z + \Delta w = 0 &&\text{in }\Omega_p\times(0,T),\label{eq:plate_poisson}\\
    &w = z = 0 &&\text{on }\partial\Omega_p\times(0,T). \label{eq:clapmedbc2}
\end{align}
The reformulation requires additional initial values for $z$ and $\partial_t z$ which are determined from the original initial data by
\begin{equation*}
    z_0 = -\Delta w_0, \quad z_{t0} = -\Delta w_{t0}.
\end{equation*}
Following \cite{Geredeli_Kunwar_Lee2024}, we begin by defining the following function spaces
\begin{align*}
    &\bm{U} = \{\bm{v} = (v_1, v_2, v_3)^T\in [H^1(\Omega_f)]^3: \bm{v} = \bm{0} \text{ on }S\},\\
    &Q = L^2(\Omega_f), \\ 
    &W = \left\{\varphi\in H^1(\Omega_p):\varphi = 0 \text{ on }\partial\Omega_p\right\} = H^1_0 (\Omega_p).
\end{align*}
It is also straightforward to show that the mean plate velocity $\partial_t w$ has mean zero due to the divergence theorem and the incompressibility of the fluid:
\begin{equation} \label{eq:wdot_zeromean}
    0 = \int_{\Omega_f} \nabla\cdot\bm{u}\,d\Omega_f = \int_S \bm{u}\cdot \bm{n}_S\,dS + \int_{\Omega_p} \bm{u}\cdot\bm{n}\,d\partial\Omega_p = \int_{\Omega_p} \partial_t w \,d\Omega_p,
\end{equation}
where $\bm{n}_S$ is the unit outward normal vector to the surface $S$. For a more detailed discussion of the model, see, for example,  \cite{Avalos2014, Chueshov2013, Chambolle2005}.

To enforce the coupling conditions, we introduce a Lagrange multiplier $\bm{g}\in \bm{G} = [H^{-1/2}(\Omega_p)]^3$ representing the interface traction
\begin{equation} \label{eq:LM_defn}
    \bm{g} \coloneqq \bm{\sigma}_f\bm{n} \quad \text{on }\Omega_p\times(0, T).
\end{equation}

Let $(\cdot,\cdot)_\gamma$ be the $L^2$ inner product over domain $\gamma$, $\norm{\cdot}_\gamma$ be the corresponding norm, and $\dual{\cdot, \cdot}_\gamma$ be the dual pairing between $H^{1/2}(\gamma)$ and $H^{-1/2}(\gamma)$. We also denote the $H^1(\gamma)$ Sobolev norm of $f$ by $\norm{f}_{1, \gamma}^2 = \norm{f}_{\gamma}^2 + \norm{\nabla f}_{\gamma}^2$. The variational problem associated to \eqref{eq:momentum}-\eqref{eq:fluid_bc}, \eqref{eq:interface} and \eqref{eq:plate_eq_order2}-\eqref{eq:clapmedbc2} reads: find $(\bm{u}, p, w, z, \bm{g})\in \bm{U}\times Q\times W\times W\times \bm{G}$ such that
\begin{align} \label{eq:momentum_weak}
    &\rho_f(\partial_t\bm{u}, \bm{v})_{\Omega_f} + 2\nu_f(D(\bm{u}), D(\bm{v}))_{\Omega_f} - (p, \nabla\cdot\bm{v})_{\Omega_f} - \dual{\bm{g}, \bm{v}}_{\Omega_p} = (\bm{f}, \bm{v})_{\Omega_f} && \forall \bm{v}\in\bm{U},\\
    &(q, \nabla\cdot \bm{u})_{\Omega_f} = 0 &&\forall q\in Q, \label{eq:continuity_weak} \\
    & \rho_p(\partial_{tt}w, \eta)_{\Omega_p} + \rho(\partial_{tt}z, \eta)_{\Omega_p} + (\nabla z, \nabla \eta)_{\Omega_p} + \dual{\bm{g}\cdot\bm{n}, \eta}_{\Omega_p} = 0 &&\forall\eta\in W, \label{eq:plate_weak}\\
    &(\nabla w, \nabla \varphi)_{\Omega_p} - (z, \varphi)_{\Omega_p} = 0 && \forall\varphi\in W, \label{eq:poisson_weak}\\
    & \dual{\partial_t w \bm{n} - \bm{u}, \bm{\lambda}}_{\Omega_p} = 0 &&\forall \bm{\lambda}\in \bm{G}.\label{eq:interface_weak}
\end{align}

\section{Fully discrete problem} \label{sec:fully-disc}

In this section, we present a fully discrete formulation of \eqref{eq:momentum_weak}-\eqref{eq:interface_weak}. We then reformulate the coupled system using a Schur complement approach, which forms the basis of the proposed partitioned algorithm. Since the analysis of the coupled formulation, for both the semi- and fully-discrete problems, follows directly from \cite{BesabeLee2026}, we omit those details here and focus on the aspects needed to develop the domain decomposition approach.

We discretize the time interval $[0,T]$ into $N\in\mathbb{Z}^+$ uniform time steps of size $\delta t = T/N$ and denote $t^n = n\delta t$. For any generic  quantity $f$, we denote its approximation at time $t^n$ by $f^n$. We use the following first-order backward Euler approximation:
\begin{equation} \label{eq:back_euler}
    \partial_{t}\bm{u}\approx \dot{\bm{u}}^{n+1}\coloneqq\frac{\bm{u}^{n+1}-\bm{u}^{n}}{\delta t}, \quad \partial_{tt}w\approx \ddot{w}^{n+1}\coloneqq\frac{\dot{w}^{n+1}-\dot{w}^n}{\delta t} = \frac{w^{n+1} - 2w^n + w^{n-1}}{(\delta t)^2}.
\end{equation}
Applying these approximations to \eqref{eq:momentum_weak}-\eqref{eq:interface_weak} yields, at each time instant $t^n$, the following saddle-point problem which reads: find $\left(\bm{u}^{n+1}, w^{n+1}, z^{n+1}, p^{n+1}, \bm{g}^{n+1}\right)\in \bm{U}\times W\times W\times Q\times \bm{G}$ such that
\begin{equation} \label{eq:saddle_point_prob}
    \begin{aligned}
        a\left((\bm{u}^{n+1}, w^{n+1}, z^{n+1}), (\bm{v}, \varphi, \eta)\right) + b\left((\bm{v}, \eta), \left(\delta t p^{n+1}, \delta t \bm{g}^{n+1}\right)\right) &= f_1(\bm{v}, \varphi, \eta), \\
        b\left(\left(\bm{u}^{n+1}, \frac{1}{\delta t}w^{n+1}\right),(q, \bm{\lambda})\right) &= f_2 (q, \bm{\lambda}),
    \end{aligned}
\end{equation}
where we used the fact that $\dual{\bm{g}^{n+1}\cdot\bm{n}, \eta}_{\Omega_p} = \dual{\bm{g}^{n+1}, \eta~\bm{n}}_{\Omega_p}$, $a: (\bm{U}\times W\times W) \times (\bm{U}\times W\times W) \to \mathbb{R}$, $b:(\bm{U}\times W)\times (Q\times \bm{G}) \to \mathbb{R}$, $f_1:\bm{U}\times W\times W \to \mathbb{R}$, 
and $f_2:Q\times \bm{G} \to \mathbb{R}$ are defined by
\begin{align*}
    &\begin{aligned}
        a&\left((\bm{u}, w, z), (\bm{v}, \varphi, \eta) \right) \coloneqq \rho_f(\bm{u}, \bm{v})_{\Omega_f} + 2\nu_f\delta t(D(\bm{u}), D(\bm{v}))_{\Omega_f} + \frac{\rho_p}{\delta t}(w, \eta)_{\Omega_p} + \frac{\rho}{\delta t}(z, \eta)_{\Omega_p} \\
        & + \delta t(\nabla z, \nabla\eta)_{\Omega_p} - \frac{\rho_p}{\delta t}(z, \varphi)_{\Omega_p} + \frac{\rho_p}{\delta t}(\nabla w, \nabla \varphi)_{\Omega_p},
    \end{aligned}\\
    & b\left((\bm{v}, \eta), (q, \bm{\lambda})\right) \coloneqq -(\nabla\cdot \bm{v}, q)_{\Omega_f} + \dual{-\bm{v} + \eta~\bm{n}, \bm{\lambda}}_{\Omega_p}, 
     \\
    &\begin{aligned}
        f_1&(\bm{v}, \varphi, \eta) \coloneqq \delta t(\bm{f}^{n+1}, \bm{v})_{\Omega_f} + \rho_f (\bm{u}^n, \bm{v})_{\Omega_f} + \frac{2\rho_p}{\delta t}(w^n, \eta)_{\Omega_p} - \frac{\rho_p}{\delta t}(w^{n-1}, \eta)_{\Omega_p} \\
        & + \frac{2\rho}{\delta t} (z^n, \eta)_{\Omega_p} - \frac{\rho}{\delta t} (z^{n-1}, \eta)_{\Omega_p},
    \end{aligned}\\
    & f_2(q, \bm{\lambda}) \coloneqq  \frac{1}{\delta t}\dual{w^{n}~\bm{n}, \bm{\lambda}}_{\Omega_p}.
\end{align*}

\begin{remark}
    Existence, uniqueness, and stability of the semi-discrete problem \eqref{eq:saddle_point_prob} follow directly from the analysis in \cite{BesabeLee2026}; hence, we omit the proof. The extension from a scalar Lagrange multiplier to a vector-valued Lagrange multiplier is straightforward, as the previous analysis does not rely on the multiplier being scalar. In particular, the only modification is to replace the vertical component of the interface traction with the full traction vector, and all arguments carry over without further changes.
\end{remark}
Suppose the computational domains, $\Omega_f$ and $\Omega_p$, have Lipschitz and polytopal boundaries, respectively. Let $\mathcal{T}_{h_f}$ and $\mathcal{T}_{h_p}$ be quasi-uniform, shape regular triangulations of $\overline{\Omega}_f$ and $\overline{\Omega}_p$, respectively, into tetrahedra where $h_f$ and $h_p$ are the corresponding mesh sizes, defined by
\begin{equation*}
    h_f = \max_{E\in \mathcal{T}_{h_f}}\mathrm{diam}(E) \quad\text{ and }\quad h_p = \max_{K\in \mathcal{T}_{h_p}}\mathrm{diam}(K).
\end{equation*}
We define the following conforming finite element spaces:
\begin{equation*}
    \begin{aligned}
        &\bm{U}_h = \{\bm{v}_h\in \bm{U}: \bm{v}_h|_{E}\in [\mathbb{P}_{k_u}(E)]^3, \quad\forall E\in \mathcal{T}_{h_f}\},\\
        &Q_{h} = \{q_h\in Q: q_h|_{E}\in \mathbb{P}_{k_p}(E), \quad \forall E\in \mathcal{T}_{h_f}\}, \\
        &W_h = \{\varphi_h\in W: \varphi_h|_{K}\in \mathbb{P}_{k_w}(K), \quad \forall K\in \mathcal{T}_{h_p}\}, \\
        &\bm{G}_{h} = \{\lambda_h\in G: \lambda_h|_{K}\in [\mathbb{P}_{k_g}(K)]^3, \quad \forall K\in \mathcal{T}_{h_p}\},
    \end{aligned}
\end{equation*}
for a fixed $1\leq k_\star\in \mathbb{Z}^+$ and $\mathbb{P}_{k_\star}$ is the space of polynomials with degree $\leq k_\star$. 

The fully-discrete problem reads: find $\left(\bm{u}_h^{n+1}, w_h^{n+1}, z_h^{n+1}, p_h^{n+1}, \bm{g}_h^{n+1}\right)\in \bm{U}_h\times W_h\times W_h\times Q_h\times \bm{G}_h$ such that
\begin{align} \label{eq:momentum_full_disc_weak}
    &\begin{aligned}
    &\rho_f(\bm{u}_h^{n+1}, \bm{v}_h)_{\Omega_f} + 2\nu_f\delta t(D(\bm{u}_h^{n+1}), D(\bm{v}_h))_{\Omega_f} - \delta t(p_h^{n+1}, \nabla\cdot \bm{v}_h)_{\Omega_f} \\
    &\quad - \delta t\dual{\bm{g}_h^{n+1}, \bm{v}_h}_{\Omega_p} = \delta t(\bm{f}^{n+1}, \bm{v})_{\Omega_f} + \rho_f (\bm{u}_h^n, \bm{v})_{\Omega_f}
    \end{aligned}
     &&\forall \bm{v}_h\in \bm{U}_h,\\
    &(\nabla\cdot \bm{u}_h^{n+1}, q_h)_{\Omega_f} = 0 &&\forall q_h\in Q_h,\label{eq:continuity_full_disc_weak} \\
    &\begin{aligned}
    &\frac{\rho_p}{(\delta t)^2}(w_h^{n+1}, \eta_h)_{\Omega_p} + \frac{\rho}{(\delta t)^2}(z_h^{n+1}, \eta_h)_{\Omega_p} + (\nabla z_h^{n+1}, \nabla \eta_h)_{\Omega_p} + \dual{\bm{g}_h^{n+1}, \eta_h\bm{n}}_{\Omega_p} \\
    & \quad = \frac{2\rho_p}{(\delta t)^2}(w_h^n, \eta_h)_{\Omega_p} - \frac{\rho_p}{(\delta t)^2}(w_h^{n-1}, \eta_h)_{\Omega_p} + \frac{2\rho}{(\delta t)^2} (z_h^n, \eta_h)_{\Omega_p} - \frac{\rho}{(\delta t)^2} (z_h^{n-1}, \eta_h)_{\Omega_p} 
    \end{aligned}
    &&\forall \eta_h\in W_h, \label{eq:plate_full_disc_weak}\\
    & (z_h^{n+1}, \varphi_h)_{\Omega_p} = (\nabla w_h^{n+1}, \nabla \varphi_h)_{\Omega_p} &&\forall \varphi_h\in W_h, \label{eq:poisson_full_semi_disc_weak} \\
    & \dual{w_h^{n+1}~\bm{n}, \bm{\lambda}_h}_{\Omega_p} - \delta t\dual{\bm{u}_h^{n+1}, \bm{\lambda}_h}_{\Omega_p} =\dual{w_h^{n} \bm{n}, \bm{\lambda}_h}_{\Omega_p} &&\forall \bm{\lambda}_h \in \bm{G}_h. \label{eq:interface_full_disc_weak}
\end{align}

\begin{lemma}\label{lem:infsup}
    There exists a positive constant $\beta$ such that 
    \begin{equation}\label{eq:full_disc_infsup}
        \sup_{(\bm{v}_h, \eta_h)\in(\bm{U}_h, W_h)} \frac{b((\bm{v}_h, \eta_h), (q_h, \bm{\lambda}_h))}{\left(\norm{\bm{v}_h}_{1, \Omega_f}^2 + \norm{\eta_h}_{1, \Omega_p}^2\right)^{1/2}} \geq \beta \left(\norm{q_h}_{\Omega_f}^2 + \norm{\bm{\lambda}_h}_{-1/2, {\Omega}_p}\right)^{1/2},
    \end{equation}
    for any $(q_h, \bm{\lambda}_h)\in (Q_h, \bm{G}_h)$.
\end{lemma}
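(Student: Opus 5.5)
The plan is to verify the discrete inf-sup condition for the combined constraint $b$ by splitting the test pair and treating the two multipliers in sequence. Because $b$ is linear in the test function, it suffices, for any given $(q_h,\bm{\lambda}_h)$, to construct a single pair $(\bm{v}_h,\eta_h)$ with
\begin{equation*}
b((\bm{v}_h,\eta_h),(q_h,\bm{\lambda}_h)) \gtrsim \norm{q_h}_{\Omega_f}^2+\norm{\bm{\lambda}_h}_{-1/2,\Omega_p}^2
\end{equation*}
and
\begin{equation*}
\norm{\bm{v}_h}_{1,\Omega_f}+\norm{\eta_h}_{1,\Omega_p}\lesssim \norm{q_h}_{\Omega_f}+\norm{\bm{\lambda}_h}_{-1/2,\Omega_p}.
\end{equation*}
(I read the right-hand side of \eqref{eq:full_disc_infsup} with the $\bm{\lambda}_h$ norm squared; otherwise the bound is not homogeneous.) I would take $\eta_h=0$ throughout. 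The plate variable is not needed for the lower bound, since $\bm{v}_h$ alone controls both multipliers. This is precisely the situation of a Stokes problem with a Lagrange multiplier imposing a Dirichlet-type condition on the part $\Omega_p$ of the boundary.

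\textbf{Step 1: the interface multiplier.} I would assume the standard compatibility hypothesis, namely a discrete trace inf-sup:
\begin{equation*}
\sup_{\bm{v}_h\in\bm{U}_h}\frac{\dual{\bm{v}_h,\bm{\lambda}_h}_{\Omega_p}}{\norm{\bm{v}_h}_{1,\Omega_f}}\ge \beta_1\norm{\bm{\lambda}_h}_{-1/2,\Omega_p}.
\end{equation*}
This is the usual mortar-type condition, which holds when $\bm{G}_h$ is suitably coarse relative to the trace of $\bm{U}_h$ on $\Omega_p$. It is obtained from the continuous duality $\norm{\bm{\lambda}}_{-1/2}=\sup\dual{\bm\mu,\bm\lambda}/\norm{\bm\mu}_{1/2}$. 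The argument proceeds in three moves:
\begin{enumerate}
\item take an $L^2$-type (or Scott--Zhang) projection of $\bm\mu$ onto the discrete trace space, which must be $H^{1/2}$-stable;
\item extend it to $\bm{U}_h$ by a discrete harmonic extension, bounded by the $H^{1/2}$ norm;
\item conclude by the inverse-trace estimate.
\end{enumerate}
I would select $\bm{v}_h^1$ attaining this bound, scaled so that $\norm{\bm v_h^1}_{1}=\norm{\bm\lambda_h}_{-1/2}$, and take $-\bm{v}_h^1$ so that the sign matches $b$.

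\textbf{Step 2: the pressure.} For the pressure I would use the discrete Stokes inf-sup of $(\bm{U}_h,Q_h)$, restricted to velocities vanishing on all of $\partial\Omega_f$, i.e. $[H^1_0]^3$. Such velocities have zero trace on $\Omega_p$, so the $\bm\lambda_h$ term vanishes for them. A subtlety arises here. On $H^1_0$ the divergence only controls $q_h$ modulo constants, so the constant mode must be handled by a velocity with nonzero flux through $\Omega_p$. I would fix one smooth $\bm{v}^\ast\in\bm U_h$ with $\int_{\Omega_f}\nabla\cdot\bm v^\ast\neq0$. A Fortin-type argument then gives a $\bm{v}_h^2$ with
\begin{equation*}
-(\nabla\cdot\bm v_h^2,q_h)\ge c\norm{q_h}^2, \qquad \norm{\bm v_h^2}_1\le C\norm{q_h},
\end{equation*}
at the cost of a perturbation term $\dual{-\bm v_h^2,\bm\lambda_h}$ bounded by $C\norm{q_h}\norm{\bm\lambda_h}_{-1/2}$.

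\textbf{Step 3: combination.} I would set $\bm v_h=\bm v_h^2+\theta\bm v_h^1$, with $\theta$ large, and absorb the cross terms by Young's inequality. The cross terms are $\theta(\nabla\cdot\bm v_h^1,q_h)$ and $\dual{\bm v_h^2,\bm\lambda_h}$, each bounded by a constant times $\theta\norm{q}\norm{\lambda}$ or by $\norm{q}\norm{\lambda}$. For the absorption to work, the balance must be arranged carefully. The cleaner alternative is the Howell--Walkington / Gatica abstract result for stacked constraints. It reduces the combined condition to two checks: the trace inf-sup on $\bm{U}_h$, and the Stokes inf-sup on the kernel $\{\bm v_h:\dual{\bm v_h,\bm\mu_h}=0\ \forall\bm\mu_h\}$. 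That kernel contains $[H^1_0]\cap\bm U_h$ together with the constant-flux issue, and I would use this abstract result.

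\textbf{Main obstacle.} I expect the main obstacle to be the discrete trace inf-sup in Step 1. It is not automatic for arbitrary $k_g$ and $h_p$ versus $h_f$: it requires a mesh-compatibility assumption such as $h_f\lesssim h_p$ or matching meshes with $k_g\le k_u$. I would state this as a hypothesis, inherited from \cite{BesabeLee2026}.
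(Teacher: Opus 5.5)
\textbf{There is a genuine gap, and it is exactly your decision to take $\eta_h=0$.} You claim that $\bm{v}_h$ alone controls both multipliers; this is false.

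The pair $(q_h,\bm{\lambda}_h)=(1,-\bm{n})$ lies in $Q_h\times\bm{G}_h$: constants belong to both spaces, and $\bm{n}$ is constant because $\Omega_p$ is planar. Since $\bm{v}_h=\bm{0}$ on $S$, the divergence theorem gives, for every $\bm{v}_h\in\bm{U}_h$,
\begin{equation*}
b\big((\bm{v}_h,0),(1,-\bm{n})\big)=-\int_{\Omega_f}\nabla\cdot\bm{v}_h\,dx+\int_{\Omega_p}\bm{v}_h\cdot\bm{n}\,ds=-\int_{\Omega_p}\bm{v}_h\cdot\bm{n}\,ds+\int_{\Omega_p}\bm{v}_h\cdot\bm{n}\,ds=0 .
\end{equation*}
So with velocity tests alone the inf-sup constant is zero. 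A constant pressure is exactly balanced by the hydrostatic traction $-\bm{n}$. Your analogy with a Stokes problem whose boundary condition is imposed by a multiplier on an all-Dirichlet boundary describes precisely this degenerate case, where $(p,\bm{g})$ is determined only modulo $(c,-c\bm{n})$.

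This breaks each of your routes:
\begin{enumerate}
\item In Step 3, no choice of $\theta$ helps. For this pair the total is identically zero: the flux velocity $\bm{v}^\ast$ produces the cross term $\dual{-\bm{v}^\ast,\bm{\lambda}_h}_{\Omega_p}$, which cancels the pressure gain exactly. (Separately, the trace-inf-sup velocity should carry a \emph{small} weight, not a large one.)
\item The Howell--Walkington route fails for the same reason. On the kernel $\{\bm{v}_h:\dual{\bm{v}_h,\bm{\mu}_h}_{\Omega_p}=0\ \forall\bm{\mu}_h\in\bm{G}_h\}$, choosing $\bm{\mu}_h=\bm{n}$ forces $(\nabla\cdot\bm{v}_h,1)_{\Omega_f}=\int_{\Omega_p}\bm{v}_h\cdot\bm{n}\,ds=0$, so constants in $Q_h$ are never controlled there.
\end{enumerate}
The real obstacle is therefore this one-dimensional mode, not the discrete trace inf-sup; the latter is a reasonable hypothesis to assume. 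Only the plate test function can fix the mode, which is why the paper's sketch routes the plate contribution through $\dual{\bm{\lambda}_h,\eta_h\bm{n}}_{\Omega_p}$. Your reading of the $\bm{\lambda}_h$ norm as squared is correct.

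\textbf{Repair.} Split
\begin{equation*}
(q_h,\bm{\lambda}_h)=(\tilde q,\tilde{\bm{\lambda}})+\bar q\,(1,-\bm{n}),\qquad \bar q=\frac{(q_h,1)_{\Omega_f}}{\abs{\Omega_f}},\quad \tilde q=q_h-\bar q,\quad \tilde{\bm{\lambda}}=\bm{\lambda}_h+\bar q\,\bm{n}.
\end{equation*}
Because the last piece is invisible to $(\bm{v}_h,0)$, you have $b((\bm{v}_h,0),(q_h,\bm{\lambda}_h))=-(\nabla\cdot\bm{v}_h,\tilde q)_{\Omega_f}-\dual{\bm{v}_h,\tilde{\bm{\lambda}}}_{\Omega_p}$. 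Now take $\bm{v}_h=\bm{v}_h^0+\delta\bm{v}_h^1$, where:
\begin{enumerate}
\item $\bm{v}_h^0\in\bm{U}_h\cap[H^1_0(\Omega_f)]^3$ comes from the Stokes inf-sup for the mean-zero $\tilde q$; its trace term vanishes.
\item $\bm{v}_h^1$ comes from your trace inf-sup applied to $\tilde{\bm{\lambda}}$.
\item $\delta$ is small, so the only cross term, $(\nabla\cdot\bm{v}_h^1,\tilde q)_{\Omega_f}$, is absorbed by Young's inequality.
\end{enumerate}
This bounds $\norm{\tilde q}_{\Omega_f}+\norm{\tilde{\bm{\lambda}}}_{-1/2,\Omega_p}$ by the supremum. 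Next fix $\eta^\ast\in W_h$ with $\int_{\Omega_p}\eta^\ast\,ds$ and $\norm{\eta^\ast}_{1,\Omega_p}$ bounded above and below independently of $h$, for instance the interpolant of a fixed bubble. Then
\begin{equation*}
b\big((\bm{0},\eta^\ast),(q_h,\bm{\lambda}_h)\big)=\dual{\eta^\ast\bm{n},\tilde{\bm{\lambda}}}_{\Omega_p}-\bar q\int_{\Omega_p}\eta^\ast\,ds ,
\end{equation*}
which bounds $\abs{\bar q}$ by the supremum plus a multiple of the already controlled $\norm{\tilde{\bm{\lambda}}}_{-1/2,\Omega_p}$. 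Finally, $\norm{q_h}_{\Omega_f}^2=\norm{\tilde q}_{\Omega_f}^2+\abs{\Omega_f}\bar q^{\,2}$ and $\norm{\bm{\lambda}_h}_{-1/2,\Omega_p}\le\norm{\tilde{\bm{\lambda}}}_{-1/2,\Omega_p}+\abs{\bar q}\,\norm{\bm{n}}_{-1/2,\Omega_p}$ give the full estimate.
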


The proof of Lemma~\ref{lem:infsup} follows the same structure as Theorem 4.1 in \cite{BesabeLee2026} with specifc differences in the interface coupling which now involves the trace of $\bm{v}_h\in \bm{U}_h$ instead of only its third component. Similarly, the plate contribution acts only on its normal component using the identity $\dual{\bm{\lambda}_h\cdot\bm{n}, \eta}_{\Omega_p} = \dual{\bm{\lambda}_h, \eta~\bm{n}}_{\Omega_p}$.

\subsection{Schur complement system}

Let $\{\bm{\varphi}_i\}, \{\eta_i\}, \{\psi_i\}$, and $\{\bm{\mu}_i\}$ be the finite element (FE) basis functions for $\bm{U}_h$, $W_h$, $Q_h$, and $\bm{G}_h$, respectively. 
Then, system \eqref{eq:momentum_full_disc_weak}-\eqref{eq:interface_full_disc_weak} may be rewritten into the linear system
\begin{equation} \label{eq:fe_lin_system}
    \left[\begin{array}{ccccc}
        \calmf_f + \delta t \calkf_f & \bm{0} & \bm{0} & -\delta t \calpf^T & -\delta t \caltf^T \\
        \bm{0} & \frac{\rho_p}{(\delta t)^2}\calmf_p & \frac{\rho}{(\delta t)^2}\calmf_p + \calkf_p & \bm{0} & \calvf^T \\
        \bm{0} & -\calkf_p & \calmf_p & \bm{0} & \bm{0} \\
        -\delta t\calpf & \bm{0} & \bm{0} & \bm{0} & \bm{0} \\
        - \delta t \caltf &\calvf & \bm{0} & \bm{0} & \bm{0}
        \end{array}\right]\left[\begin{array}{c}
        \overline{\bm{u}}_h^{n+1} \\
        \overline{\bm{w}}_h^{n+1} \\
        \overline{\bm{z}}_h^{n+1} \\
        \overline{\bm{p}}_h^{n+1}\\
        \overline{\bm{g}}_h^{n+1}
    \end{array}\right] = \left[\begin{array}{c}
        \overline{\bm{f}}_1^{n+1} \\
        \overline{\bm{f}}_2^{n+1} \\
        \overline{\bm{f}}_3^{n+1} \\
        \overline{\bm{f}}_4^{n+1} \\
        \overline{\bm{f}}_5^{n+1}
    \end{array}\right],
\end{equation}
where the matrices in the system matrix are defined by
\begin{equation} \label{eq:fe_matrices}
    \begin{aligned}
    (\calmf_f)_{ij} = \rho_f(\bm{\varphi}_j, \bm{\varphi}_i)_{\Omega_f}, \quad (\calkf_f)_{ij} = 2\nu_f(D(\bm{\varphi}_j), D(\bm{\varphi}_i))_{\Omega_f}, \quad (\calpf)_{ij} = (\nabla\cdot\bm{\varphi}_j, \psi_i)_{\Omega_f},\\
    (\caltf)_{ij} = \dual{\bm{\varphi}_j, \bm{\mu}_i}_{\Omega_p}, \quad (\calmf_p)_{ij} = (\eta_j, \eta_i)_{\Omega_p}, \quad (\calkf_p)_{ij} = (\nabla\eta_j, \nabla\eta_i)_{\Omega_p}, \quad (\calvf)_{ij} = \dual{\eta_j\bm{n}, \bm{\mu}_i}_{\Omega_p}.
    \end{aligned}
\end{equation}
and the RHS components are defined by
\begin{equation} \label{eq:rhs_vec}
    \begin{aligned}
        (\overline{\bm{f}}_1^{n+1})_j &= \delta t(\bm{f}^{n+1}, \bm{\varphi}_j)_{\Omega_f} + (\calmf_f\overline{\bm{u}}_h^n)_j, \\
        (\overline{\bm{f}}_2^{n+1})_j &= \frac{2\rho_p }{(\delta t)^2}(\calmf_p\overline{\bm{w}}_h^n)_j - \frac{\rho_p}{(\delta t)^2}(\calmf_p\overline{\bm{w}}_h^{n-1})_j + \frac{2\rho}{(\delta t)^2}(\calmf_p\overline{\bm{z}}_h^{n})_j - \frac{\rho}{(\delta t)^2}(\calmf_p\overline{\bm{z}}_h^{n-1})_j, \\
        (\overline{\bm{f}}_3^{n+1})_j &= 0, \\
        (\overline{\bm{f}}_4^{n+1})_j &= 0, \\
        (\overline{\bm{f}}_5^{n+1})_j &= (\calvf\overline{\bm{w}}_h^{n})_j,
    \end{aligned}
\end{equation}
and the vectors $\overline{\bm{u}}_h^{n+1}, \overline{\bm{w}}_h^{n+1}, \overline{\bm{z}}_h^{n+1}, \overline{\bm{p}}_h^{n+1}, \overline{\bm{g}}_h^{n+1}$ contain the FE coefficients for the variables $\bm{u}_h, w_h, z_h, p_h$, and $\bm{g}_h$, respectively.

Let $N_u$, $N_p$, $N_w$, $N_g$ be the total number of degrees of freedom for $\bm{u}_h$, $p_h$, $w_h$ (and $z_h$), and $\bm{g}_h$, respectively. Hence, the matrices in \eqref{eq:fe_matrices} have the following sizes:
\begin{equation*}
    \calmf_f, \calkf_f: N_u\times N_u, \quad \calmf_p, \calkf_p: N_w\times N_w, \quad \calpf: N_p\times N_u, \quad \caltf: N_g\times N_u, \quad \calvf: N_g\times N_w.
\end{equation*}
Following \cite{de_Castro2025}, for the partitioned method, we aim to express the vector $\overline{\bm{g}}_h^{n+1}$ as a function of the other variables. We begin by defining the following matrices:
\begin{equation}
    \calwf_f \coloneqq \calmf_f + \delta t\calkf_f, \quad \calaf_f \coloneqq \left[\begin{array}{c}
        \calpf \\
        \caltf
    \end{array}\right], \quad \calwf_p \coloneqq \left[\begin{array}{cc}
        \rho_p\calmf_p & \rho\calmf_p + (\delta t)^2\calkf_p \\
        -\delta t\calkf_p & \delta t\calmf_p 
    \end{array}\right], \quad \calaf_p \coloneqq \left[\begin{array}{cc}
        \bm{0} & \bm{0} \\
        \calvf & \bm{0} 
    \end{array}\right],
\end{equation}
and new variables:
\begin{equation}
    \overline{\bm{y}}_h^{n+1} \coloneqq \delta t\left[\begin{array}{c}
        \overline{\bm{p}}_h^{n+1} \\
        \overline{\bm{g}}_h^{n+1}
    \end{array}\right], \quad \overline{\bm{x}}_h^{n+1} \coloneqq \frac{1}{\delta t}\left[\begin{array}{c}
        \overline{\bm{w}}_h^{n+1} \\
        \overline{\bm{z}}_h^{n+1}
    \end{array}\right].
\end{equation}
Hence, we can rewrite the matrix equation \eqref{eq:fe_lin_system} as the following
\begin{align}
    &\calwf_f\overline{\bm{u}}_h^{n+1} - \calaf_f^T\overline{\bm{y}}_h^{n+1} = \overline{\bm{f}}_1^{n+1}, \label{eq:disc_fluid_eq} \\
    &\calwf_p\overline{\bm{x}}_h^{n+1} + \calaf_p^T\overline{\bm{y}}_h^{n+1} = \left[\begin{array}{c}
        \delta t\overline{\bm{f}}_2^{n+1} \\
        \overline{\bm{f}}_3^{n+1}
    \end{array}\right] \eqcolon \overline{\bm{d}}_1^{n+1}, \label{eq:disc_plate_eq} \\
    & \calaf_p\overline{\bm{x}}_h^{n+1} - \calaf_f \overline{\bm{u}}_h^{n+1} = \left[\begin{array}{c}
        \overline{\bm{f}}_4^{n+1} \\
      \frac{1}{\delta t}\overline{\bm{f}}_5^{n+1}
    \end{array}\right]\eqcolon \overline{\bm{d}}_2^{n+1}. \label{eq:disc_interf_eq}
\end{align}
From \eqref{eq:disc_fluid_eq}-\eqref{eq:disc_plate_eq}, we obtain
\begin{equation} \label{eq:uwz_solution}
    \overline{\bm{u}}_h^{n+1} = \calwf_f^{-1} \left(\overline{\bm{f}}_1^{n+1} + \calaf_f^T\overline{\bm{y}}_h^{n+1}\right), \quad \overline{\bm{x}}_h^{n+1} = \calwf_p^{-1}\left(\overline{\bm{d}}_1^{n+1} - \calaf_p^T\overline{\bm{y}}_h^{n+1}\right).
\end{equation}
Plugging these into \eqref{eq:disc_interf_eq} and rearranging the terms, we get
\begin{equation} \label{eq:schur_eq}
    \left(\calaf_f\calwf_f^{-1}\calaf_f^T + \calaf_p\calwf_p^{-1}\calaf_p^T\right)\overline{\bm{y}}_h^{n+1} = \calaf_p\calwf_p^{-1}\overline{\bm{d}}_1^{n+1} - \calaf_f\calwf_f^{-1}\overline{\bm{f}}_1^{n+1} - \overline{\bm{d}}_2^{n+1}.
\end{equation}
Equation \eqref{eq:schur_eq} is referred to as the Schur complement equation with the system matrix $\calsf = \calaf_f\calwf_f^{-1}\calaf_f^T + \calaf_p\calwf_p^{-1}\calaf_p^T$. After solving \eqref{eq:schur_eq} for $\overline{\bm{y}}_h^{n+1}$, 
other variables can be computed using \eqref{eq:uwz_solution}. Note that \eqref{eq:schur_eq} is uniquely solvable if $\calsf$ has a full rank. 
The conditioning of $\calsf$ and the associated matrices is analyzed in the next section. We summarize the partitioned algorithm in Alg.~\ref{alg:scur_dd_alg}.

\begin{algorithm}
\caption{Partitioned algorithm for the 3D fluid-2D plate system}
\label{algo1}
\label{alg:scur_dd_alg}
\begin{algorithmic}[1]
    \setlength{\itemsep}{1.3mm} 
    \Statex\textbf{Input:} $N\in \mathbb{N}$, and initial conditions $\bm{u}_0$, $w_0$, $w_{t0}$, $z_0$, and $z_{t0}$.
    \State Set $\overline{\bm{u}}_h^{0} = \bm{u}_0$, $\overline{\bm{w}}_h^{0} = w_0$, $\overline{\bm{w}}_h^{-1} = w^0 - \delta t \, w_{t0}$, $\overline{\bm{z}}_h^{0} = z_0$, and $\overline{\bm{z}}_h^{-1} = z^0 - \delta t \, z_{t0}$.
    \For{$n = 0, 1, \dots, N-1$}
        \State Compute $\overline{\bm{f}}_1^{n+1}$, $\overline{\bm{d}}_1^{n+1}$, and $\overline{\bm{d}}_2^{n+1}$ using \eqref{eq:rhs_vec}.
        \State Solve for $\overline{\bm{y}}_h^{n+1}$ in $\left(\calaf_f\calwf_f^{-1}\calaf_f^T + \calaf_p\calwf_p^{-1}\calaf_p^T\right)\overline{\bm{y}}_h^{n+1} = \calaf_p\calwf_p^{-1}\overline{\bm{d}}_1^{n+1} - \calaf_f\calwf_f^{-1}\overline{\bm{f}}_1^{n+1} - \overline{\bm{d}}_2^{n+1}$.
        \State Solve for $\overline{\bm{u}}_h^{n+1}$ in $\calwf_f\overline{\bm{u}}_h^{n+1} - \calaf_f^T\overline{\bm{y}}_h^{n+1} = \overline{\bm{f}}_1^{n+1}$.
        \State Solve for $\overline{\bm{x}}_h^{n+1}$ in $\calwf_p\overline{\bm{x}}_h^{n+1} + \calaf_p^T\overline{\bm{y}}_h^{n+1} = \overline{\bm{d}}_1^{n+1}$.
    \EndFor
\end{algorithmic}
\end{algorithm}

The Schur complement formulation \eqref{eq:schur_eq} is a partitioned approach in which the fluid and plate subproblems are solved 
independently, possibly in parallel. This approach offers several advantages over other methods. First, unlike other strongly coupled partitioned methods, this approach does not require subiterations between the two subproblems. Second, it enables the reuse of existing fluid and structural solvers without modifying their internal implementations. Since the Schur complement system is restricted to the pressure and LM unknowns, the computational effort associated with the coupled problem is significantly reduced compared with a fully monolithic approach.

A drawback of Schur complement methods is that the system matrix $\calsf$ is generally dense, even though the underlying matrices are sparse. Additionally, its conditioning may deteriorate as the mesh is refined, which can adversely affect the convergence of Krylov subspace methods. One possible approach is to explicitly assemble the Schur complement matrix once and solve the resulting system using a direct solver by just updating the right hand side every time step. However, because the Schur complement is dense, its explicit assembly can lead to substantial memory requirements and computational cost, particularly for large-scale problems. To avoid these issues, we employ a matrix-free implementation in which only matrix-vector products involving the matrices that constitute the system matrix $\calsf$ are evaluated. The resulting linear system is solved using the Generalized Minimal Residual (GMRES) method, while the independent fluid and plate subproblems \eqref{eq:disc_fluid_eq}-\eqref{eq:disc_plate_eq} are solved using the parallel sparse direct solver MUMPS \cite{MUMPS:1, MUMPS:2}. 

\section{Conditioning of the Schur complement matrix} \label{sec:cond_num_schurc}

The objective of this section is to quantify the conditioning of the Schur complement matrix and identify the dominant source of possible ill-conditioning. To this end, we first rewrite the plate block in a more convenient form, allowing the Schur complement to be expressed as the product $\widetilde{\calbf}\widetilde{\calwf}^{-1}\widetilde{\calbf}^T$.

To simplify the analysis of the Schur complement matrix, we first rewrite the plate block $\calwf_p$. 
For the sake of simplicity, we define the notations for the blocks as the following: $A = \rho_p\calmf_p$, $B = \rho\calmf_p + (\delta t)^2\calkf_p$, $C = -\delta t\calkf_p$, and $D = \delta t\calmf_p$. 
Note that since the mass $\calmf_p$ and stiffness $\calkf_p$ matrices are invertible, we have \cite{Bernstein2005}
\begin{equation}
    \calwf_p^{-1} = \left[\begin{array}{cc}
        (A - BD^{-1}C)^{-1} & -(A - BD^{-1}C)^{-1}BD^{-1} \\
        -(D - CA^{-1}B)^{-1}CA^{-1} & (D-CA^{-1}B)^{-1}
    \end{array}\right] \eqcolon \left[\begin{array}{cc}
        E & F \\
        G & H
    \end{array}\right].
\end{equation}
This means that we can simplify $\calaf_p\calwf_p^{-1}\calaf_p^T$ as 
\begin{align*}
    \calaf_p\calwf_p^{-1}\calaf_p^T &= \left[\begin{array}{cc}
        \bm{0}_{N_p\times N_w} & \bm{0}_{N_p\times N_w} \\
         \calvf & \bm{0}_{N_g\times N_w}
    \end{array}\right]\left[\begin{array}{cc}
        E & F \\
        G & H 
    \end{array}\right]\left[\begin{array}{cc}
        \bm{0}_{N_w\times N_p} & \calvf^T \\
         \bm{0}_{N_w\times N_p} & \bm{0}_{N_w\times N_g}
    \end{array}\right]\\
    &= \left[\begin{array}{cc}
        \bm{0}_{N_p\times N_p} & \bm{0}_{N_p\times N_g} \\
        \bm{0}_{N_g\times N_p} & \calvf E\calvf^T
    \end{array}\right].
\end{align*}
For consistency of notation, we define 
\begin{equation} \label{eq:tildeWp_def}
    \widetilde{\calwf}_p \coloneqq E^{-1} = A - BD^{-1}C = \rho_p\calmf_p + \rho\calkf_p + (\delta t)^2\calkf_p\calmf_p^{-1}\calkf_p.
\end{equation}
Note that since the mass and stiffness matrices are symmetric and positive definite (spd), $\widetilde{\calwf}_p$ is also spd. Hence, we introduce the following matrices:
\begin{equation} \label{eq:tilde_matrices}
    \widetilde{\calaf}_p \coloneqq \left[\begin{array}{c}
        \bm{0}_{N_p\times N_w} \\
        \calvf 
    \end{array}\right], \quad
    \widetilde{\calwf} \coloneqq \left[\begin{array}{cc}
        \calwf_f & \bm{0} \\
        \bm{0} & \widetilde{\calwf}_p
    \end{array}\right], \quad \widetilde{\calbf} \coloneqq \left[\calaf_f \quad \widetilde{\calaf}_p\right] = \left[\begin{array}{cc}
        \calpf & \bm{0} \\
        \caltf & \calvf
    \end{array}\right].
\end{equation}
It is easy to see that
\begin{equation} \label{eq:S-definition1}
    \calsf = \calaf_f\calwf_f^{-1}\calaf_f^T + \widetilde{\calaf}_p\widetilde{\calwf}_p^{-1}\widetilde{\calaf}_p^T \eqcolon \widetilde{\calbf}\widetilde{\calwf}^{-1}\widetilde{\calbf}^T.
\end{equation}
Following the similar steps in \cite[Sec. 5]{de_Castro2025}, we have
\begin{equation}\label{eq:cond_number_reform}
    \kappa(\calsf) \leq (\kappa(\widetilde{\calbf}))^2\kappa(\widetilde{\calwf}).
\end{equation}

We repeatedly use the following standard finite element estimates, see e.g., \cite{Quarteroni_Valli1994}:
\begin{enumerate}
    \item For any finite element function $v_h = \sum_i v_i\phi_i$, there exist positive constants $C_\ell$, $C_u$  such that 
    \begin{equation}
    \label{eq:vec-func_bound}
        C_\ell h^d \abs{\overline{\bm{v}}}^2 \leq \norm{v_h}_{\Omega}^2 \leq C_u h^d \abs{\overline{\bm{v}}}^2;
    \end{equation}
    where $\overline{\bm{v}}$ is the vector containing the coefficients $v_i$ and $\abs{\overline{\bm{v}}}$ is the Euclidean norm of $\overline{\bm{v}}$.
    \item Stiffness matrix eigenvalues: there exist positive constants $c_i^\ell$, $i=1,2,3$  such that
    \begin{equation} \label{eq:stiff_eig_bound}
        c_1^\ell\nu h_\ell^d \leq \lambda_i(\calkf_\ell) \leq c_2^\ell \nu h_\ell^d(1+c^\ell_3 h_\ell^{-2}),
    \end{equation}
    where $d = 3$ for the fluid, $d = 2$ for the plate, $\nu = 2\nu_f$ if $\ell = f$, and $\nu = 1$ if $\ell = p$.
    \item Mass matrix eigenvalues: there exist positive constants $c_i^\ell$, $i=1,2$ such that    \begin{equation}\label{eq:mass_eig_bound}
        \bar{c}_1^\ell\rho_\ell h_\ell^d \leq \lambda_i(\calmf_\ell) \leq \bar{c}_2^\ell \rho_\ell h_\ell^d,
    \end{equation}
    where $d = 3$ for the fluid and $d = 2$ for the plate, and $\rho_\ell = 1$ if $\ell=p$. 
\end{enumerate}

\begin{lemma} \label{lem:wtilde_cond}
    We have the following estimate for the condition number $\kappa(\widetilde{\calwf})$ of $\widetilde{\calwf}$:
    \begin{equation} \label{eq:W_cond}
        \kappa(\widetilde{\calwf}) \leq \frac{\max\left\{C_f(h_f^3 + \delta th_f^3 + \delta t h_f), C_p(h_p^2 + 1 + (\delta t)^2(h_p^2 + 1 + h_p^{-2}))\right\}}{\min\left\{c_f h_f^3(1 + \delta t), c_p(1 + (\delta t)^2)h_p^2\right\}},
    \end{equation}
    for some constants $C_{\ell}, \,c_{\ell} >0$, $\ell=f,p$,  independent of the mesh sizes, $h_f$ and $h_p$, and the time step $\delta t$.
\end{lemma}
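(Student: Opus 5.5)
Since $\widetilde{\calwf}$ is block diagonal with symmetric positive definite blocks $\calwf_f$ and $\widetilde{\calwf}_p$, its spectrum is the union of the two block spectra. Hence
\begin{equation*}
\kappa(\widetilde{\calwf}) = \frac{\max\{\lambda_{\max}(\calwf_f),\lambda_{\max}(\widetilde{\calwf}_p)\}}{\min\{\lambda_{\min}(\calwf_f),\lambda_{\min}(\widetilde{\calwf}_p)\}}.
\end{equation*}
It therefore suffices to bound the extreme eigenvalues of each block separately, using the eigenvalue bounds \eqref{eq:stiff_eig_bound}--\eqref{eq:mass_eig_bound}. For every spd matrix involved we use the Rayleigh quotient: for spd $X,Y$, $\lambda_{\min}(X)+\lambda_{\min}(Y)\le \lambda_{\min}(X+Y)$ and $\lambda_{\max}(X+Y)\le\lambda_{\max}(X)+\lambda_{\max}(Y)$.

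For the fluid block $\calwf_f=\calmf_f+\delta t\calkf_f$ we apply this sum rule with $d=3$.
\begin{itemize}
\item Lower bound: $\lambda_{\min}\ge \bar c_1^f\rho_f h_f^3 + \delta t\, c_1^f 2\nu_f h_f^3 \ge c_f h_f^3(1+\delta t)$.
\item Upper bound: $\lambda_{\max}\le \bar c_2^f\rho_f h_f^3+\delta t\, c_2^f 2\nu_f(h_f^3+c_3^f h_f)\le C_f(h_f^3+\delta t h_f^3+\delta t h_f)$.
\end{itemize}
The physical parameters $\rho_f,\nu_f$ are absorbed into the constants.

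For the plate block $\widetilde{\calwf}_p=\rho_p\calmf_p+\rho\calkf_p+(\delta t)^2\calkf_p\calmf_p^{-1}\calkf_p$, the first two terms are handled exactly as above with $d=2$. They give at most $C(h_p^2+1)$ from above, using $h_p^2(1+c_3^p h_p^{-2})=h_p^2+c_3^p$, and at least $c\,h_p^2$ from below.

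The main step is the third term $\calkf_p\calmf_p^{-1}\calkf_p$. It is spd, being congruent to $\calmf_p^{-1}$. Its Rayleigh quotient is
\begin{equation*}
\frac{x^T\calkf_p\calmf_p^{-1}\calkf_p x}{x^Tx}=\frac{y^T\calmf_p^{-1}y}{y^Ty}\cdot\frac{x^T\calkf_p^2x}{x^Tx},\qquad y=\calkf_p x .
\end{equation*}
Bounding each factor by the extreme eigenvalues gives
\begin{equation*}
\frac{\lambda_{\min}(\calkf_p)^2}{\lambda_{\max}(\calmf_p)}\le \lambda(\calkf_p\calmf_p^{-1}\calkf_p)\le \frac{\lambda_{\max}(\calkf_p)^2}{\lambda_{\min}(\calmf_p)}.
\end{equation*}
Inserting the bounds, the upper side is $\lesssim h_p^{4}(1+c_3^p h_p^{-2})^2/h_p^2$. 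Expanding and absorbing the cross term $h_p^{0}$ gives $\lesssim h_p^2+1+h_p^{-2}$. The lower side is $\gtrsim h_p^4/h_p^2=h_p^2$. Multiplying by $(\delta t)^2$ and adding to the first two terms yields
\begin{equation*}
\lambda_{\max}(\widetilde{\calwf}_p)\le C_p\bigl(h_p^2+1+(\delta t)^2(h_p^2+1+h_p^{-2})\bigr),\qquad \lambda_{\min}(\widetilde{\calwf}_p)\ge c_p(1+(\delta t)^2)h_p^2 .
\end{equation*}
In the lower bound we drop the nonnegative $\rho\calkf_p$ contribution.

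Combining the four extreme-eigenvalue estimates in the displayed formula for $\kappa(\widetilde{\calwf})$ gives \eqref{eq:W_cond}. All constants depend only on the shape-regularity and quasi-uniformity constants and on the physical parameters, not on $h_f$, $h_p$ or $\delta t$. The only delicate point is the non-commuting product $\calkf_p\calmf_p^{-1}\calkf_p$, handled by the factorized Rayleigh quotient above. If sharper bounds were wanted, one could instead use $\calmf_p\sim h_p^2 I$ spectrally, but the crude factorization suffices for the stated estimate.
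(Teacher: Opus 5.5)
Your proof is correct and takes essentially the same route as the paper. Both bound each diagonal block's extreme eigenvalues from the mass/stiffness estimates, and both treat the term $\calkf_p\calmf_p^{-1}\calkf_p$ by substituting $y=\calkf_p x$ in the Rayleigh quotient to get $\lambda_{\min}(\calkf_p)^2/\lambda_{\max}(\calmf_p)\le\lambda(\calkf_p\calmf_p^{-1}\calkf_p)\le\lambda_{\max}(\calkf_p)^2/\lambda_{\min}(\calmf_p)$; you simply state explicitly the union-of-spectra fact for the block-diagonal matrix and the eigenvalue sum inequalities that the paper uses implicitly.
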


\begin{proof}
 From the definition of $\calwf_f$ and the classical results 
  \eqref{eq:stiff_eig_bound} and \eqref{eq:mass_eig_bound}, we have the following estimate for the eigenvalues of $\calwf_f$:
    \begin{equation} \label{eq:Wf_lambda}
        \left(\bar{c}_1^f\rho_f + 2\delta t c_1^f \nu_f \right)h_f^3 \leq \lambda_i(\calwf_f) \leq \left(\bar{c}_2^f\rho_f + 2\delta t c_2^f \nu_f\left(1 + c_3^f h_f^{-2}\right)\right) h_f^3,
    \end{equation}
    which is the same bound found in \cite{de_Castro2025}.

    We denote the maximum eigenvalue of a matrix $\calaf\in \mathbb{R}^{N\times N}$ by $\lambda_\text{max}(\calaf)$ and the minimum eigenvalue by $\lambda_\text{min}(\calaf)$. Hence, for any eigenvalue $\lambda_i(\calaf)$ of $\calaf$,
    \begin{equation*}
        \lambda_\text{min}(\calaf) \leq \lambda_i(\calaf) \leq \lambda_\text{max}(\calaf).
    \end{equation*}
    Note that from the Rayleigh-Ritz theorem \cite[Theorem 4.2.2]{Horn_Johnson1085}, we have the following relation for any symmetric matrix $\calaf$:
    \begin{align}
        \lambda_\text{min}(\calaf) \leq &\frac{\overline{\bm{x}}^T\calaf\overline{\bm{x}}}{\abs{\overline{\bm{x}}}^2} \leq \lambda_\text{max}(\calaf) \label{eq:RRT-1} \\
        \lambda_\text{min}(\calaf) = \min_{\overline{\bm{x}}\in \mathbb{R}^N} \frac{\overline{\bm{x}}^T\calaf\overline{\bm{x}}}{\abs{\overline{\bm{x}}}^2}, &\quad \lambda_\text{max}(\calaf) = \max_{\overline{\bm{x}}\in \mathbb{R}^N} \frac{\overline{\bm{x}}^T\calaf\overline{\bm{x}}}{\abs{\overline{\bm{x}}}^2},\label{eq:RRT-2}
    \end{align}
    where $\abs{\overline{\bm{x}}}^2 = \overline{\bm{x}}^T\overline{\bm{x}}$. Note that since $\calmf_p$, and $\calkf_p$ are spd, $\calkf_p\calmf_p^{-1}\calkf_p$ is also spd. If $\overline{\bm{y}} = \calkf_p\overline{\bm{x}}$, then by \eqref{eq:RRT-1}-\eqref{eq:RRT-2}, we have
    \begin{equation}
        \overline{\bm{y}}^T\calmf_p^{-1}\overline{\bm{y}} \geq \lambda_\text{min}(\calmf_p^{-1})\abs{\overline{\bm{y}}}^2 = \frac{1}{\lambda_\text{max}(\calmf_p)}\abs{\overline{\bm{y}}}^2 = \frac{1}{\lambda_\text{max}(\calmf_p)}\abs{\calkf_p\overline{\bm{x}}}^2, \label{eq:RRT-3}
    \end{equation}
    for any $\overline{\bm{x}}\in\mathbb{R}^{N_w}$. By the same arguments and the symmetry of $\calkf_p$, we have
    \begin{equation}
        \abs{\calkf_p\overline{\bm{x}}}^2 = \overline{\bm{x}}^T\calkf_p^T\calkf_p\overline{\bm{x}} = \overline{\bm{x}}^T\calkf_p^2\overline{\bm{x}} \geq \lambda_\text{min}(\calkf_p)^2\abs{\overline{\bm{x}}}^2. \label{eq:RRT-4}
    \end{equation}
    Hence, by \eqref{eq:RRT-2}-\eqref{eq:RRT-4}, we obtain
    \begin{equation*}
        \lambda_\text{min}(\calkf_p\calmf_p^{-1}\calkf_p) \geq \frac{\lambda_\text{min}(\calkf_p)^2}{\lambda_\text{max}(\calmf_p)}.
    \end{equation*} 
    Similarly, we have
    \begin{equation*} 
        \lambda_\text{max}(\calkf_p\calmf_p^{-1}\calkf_p) \leq \frac{\lambda_\text{max}(\calkf_p)^2}{\lambda_\text{min}(\calmf_p)}.
    \end{equation*}
    This means that for any eigenvalue $\lambda_i(\calkf_p\calmf_p^{-1}\calkf_p)$ of the matrix $\calkf_p\calmf_p^{-1}\calkf_p$, we have the following estimate
    \begin{equation*}
        \frac{\lambda_\text{min}(\calkf_p)^2}{\lambda_\text{max}(\calmf_p)} \leq \lambda_i(\calkf_p\calmf_p^{-1}\calkf_p) \leq \frac{\lambda_\text{max}(\calkf_p)^2}{\lambda_\text{min}(\calmf_p)}.
    \end{equation*}
    Hence, there exist constants $C_1, C_2>0$ such that
    \begin{equation} \label{eq:temp_eig_bound}
       C_1 h_p^2 \leq \lambda_i(\calkf_p\calmf_p^{-1}\calkf_p) \leq C_2(h_p^2 + 1 + h_p^{-2}).
    \end{equation}
    Using  \eqref{eq:tildeWp_def}, \eqref{eq:stiff_eig_bound} \eqref{eq:mass_eig_bound} and \eqref{eq:temp_eig_bound},
    we obtain the estimate 
    \begin{equation*}
        \bar{c}_1^p\rho_p h_p^2 + c_1^p\rho h_p^2 + C_1(\delta t)^2h_p^2 \leq \lambda_i(\widetilde{\calwf}_p) \leq \bar{c}_2^p\rho_ph_p^2 + c_2^p\rho h_p^2(1 + c_3^ph_p^{-2}) + C_2(\delta t)^2(h_p^2 + 1 + h_p^{-2}),
    \end{equation*}
    which means there exist $C_p, c_p > 0$ independent of $h_p$ and $\delta t$ such that
    \begin{equation} \label{eq:Wp_lambda}
        \begin{aligned}
            \lambda_\text{min}(\widetilde{\calwf}_p) \geq c_p(1 + (\delta t)^2)h_p^2,\quad \lambda_\text{max}(\widetilde{\calwf}_p) \leq C_p (h_p^2 + 1 + (\delta t)^2(h_p^2 + 1 + h_p^{-2})).
        \end{aligned}
    \end{equation}
    Thus, using \eqref{eq:Wf_lambda} and \eqref{eq:Wp_lambda}, we have the following estimate for $\kappa\left(\widetilde{\calwf}\right)$:
    \begin{equation*}
        \begin{aligned}
            \kappa(\widetilde{\calwf}) &= \frac{\lambda_\text{max}(\widetilde{\calwf})}{\lambda_\text{min}(\widetilde{\calwf})} \leq \frac{\max\left\{C_f(h_f^3 + \delta th_f^3 + \delta t h_f), C_p(h_p^2 + 1 + (\delta t)^2(h_p^2 + 1 + h_p^{-2}))\right\}}{\min\left\{c_f h_f^3(1 + \delta t), c_p (1 + (\delta t)^2)h_p^2\right\}}.
        \end{aligned}
    \end{equation*}
    for some positive constants $c_f, C_f$ which only depend on physical parameters.
\end{proof}

Next, we provide an estimate for the condition number of $\widetilde{\calbf}$. Note that the condition number of a rectangular matrix is given by
\begin{equation*}
    \kappa(\widetilde{\calbf}) = \frac{\sigma_\text{max}(\widetilde{\calbf})}{\sigma_\text{min}(\widetilde{\calbf})},
\end{equation*}
where $\sigma_\text{max}(\widetilde{\calbf})$ and $\sigma_\text{min}(\widetilde{\calbf})$ are the largest and smallest singular values of $\widetilde{\calbf}$, respectively.

\begin{lemma} \label{lem:btilde_cond}
    We have the following estimate for the condition number $\kappa(\widetilde{\calbf})$ of $\widetilde{\calbf}$:
    \begin{equation} \label{eq:B_cond}
        \kappa(\widetilde{\calbf}) \leq \bar{C}\left(\frac{(h_f + 1)(h_f^3 + h_p^2)}{\min\{h_f^3, h_p^2\}\min\{h_f^3, h_p^3\}}\right)^{1/2}
    \end{equation}
    for some positive constant $\bar{C}$ independent of the mesh sizes $h_f, h_p$, and time step $\delta t$.
\end{lemma}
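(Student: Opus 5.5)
The plan is to estimate the extreme singular values of $\widetilde{\calbf}$ separately, working through the bilinear form $b$. For $\overline{\bm{x}}=(\overline{\bm{v}},\overline{\bm{\eta}})\in\mathbb{R}^{N_u+N_w}$ and $\overline{\bm{y}}=(\overline{\bm{q}},\overline{\bm{\lambda}})\in\mathbb{R}^{N_p+N_g}$, the definitions \eqref{eq:fe_matrices} and \eqref{eq:tilde_matrices} give, up to sign conventions that do not affect singular values,
\begin{equation*}
\overline{\bm{y}}^T\widetilde{\calbf}\,\overline{\bm{x}} = -\,b\big((\bm{v}_h,\eta_h),(q_h,\bm{\lambda}_h)\big).
\end{equation*}
From this identity we get
\begin{equation*}
\sigma_\text{max}(\widetilde{\calbf})=\sup_{\overline{\bm{x}},\overline{\bm{y}}}\frac{\overline{\bm{y}}^T\widetilde{\calbf}\overline{\bm{x}}}{\abs{\overline{\bm{x}}}\abs{\overline{\bm{y}}}},
\qquad
\sigma_\text{min}(\widetilde{\calbf})=\inf_{\overline{\bm{y}}}\sup_{\overline{\bm{x}}}\frac{\overline{\bm{y}}^T\widetilde{\calbf}\overline{\bm{x}}}{\abs{\overline{\bm{x}}}\abs{\overline{\bm{y}}}}.
\end{equation*}
The second formula is valid because $\widetilde{\calbf}$ has full row rank. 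This rank property is itself a consequence of Lemma~\ref{lem:infsup}.

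For the upper bound, I will bound each term of $b$ by Cauchy--Schwarz.
\begin{itemize}
\item Divergence term: $\abs{(\nabla\cdot\bm{v}_h,q_h)}\le C\norm{\bm{v}_h}_{1,\Omega_f}\norm{q_h}_{\Omega_f}$.
\item Interface terms: I will use the $L^2(\Omega_p)$ pairing together with the discrete trace inequality $\norm{\bm{v}_h}_{\Omega_p}\le C h_f^{-1/2}\norm{\bm{v}_h}_{\Omega_f}$, or alternatively the $H^1$ trace, whichever produces the stated powers.
\end{itemize}
Each function norm is then converted to a Euclidean norm using \eqref{eq:vec-func_bound}, \eqref{eq:stiff_eig_bound} and \eqref{eq:mass_eig_bound}:
\begin{itemize}
\item $\norm{\bm{v}_h}_{1,\Omega_f}^2\le C h_f(1+h_f^2)\abs{\overline{\bm{v}}}^2$;
\item $\norm{q_h}^2\le Ch_f^3\abs{\overline{\bm{q}}}^2$;
\item $\norm{\bm{\lambda}_h}_{\Omega_p}^2\le Ch_p^2\abs{\overline{\bm{\lambda}}}^2$;
\item $\norm{\eta_h}_{\Omega_p}^2\le Ch_p^2\abs{\overline{\bm{\eta}}}^2$.
\end{itemize}
Collecting these contributions should give $\sigma_\text{max}^2\le C(h_f+1)(h_f^3+h_p^2)/\min\{h_f^3,h_p^2\}$ up to the precise arrangement of powers. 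The factor $(h_f+1)$ comes from $h_f(1+h_f^2)$ and the lower-order fluid contributions. The factor $h_f^3+h_p^2$ comes from summing the $q$- and $\lambda$-scalings.

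For the lower bound, I will apply Lemma~\ref{lem:infsup}. For fixed $\overline{\bm{y}}$, it provides an $\overline{\bm{x}}$ with
\begin{equation*}
b\ge\beta\big(\norm{q_h}^2+\norm{\bm{\lambda}_h}_{-1/2,\Omega_p}^2\big)^{1/2}\big(\norm{\bm{v}_h}_{1}^2+\norm{\eta_h}_{1}^2\big)^{1/2}.
\end{equation*}
Two comparisons between function norms and vector norms are then needed.
\begin{enumerate}
\item On the $\overline{\bm{x}}$ side: $\norm{\bm{v}_h}_1^2+\norm{\eta_h}_1^2\ge c\min\{h_f^3,h_p^2\}\,\abs{\overline{\bm{x}}}^2$. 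This uses only the lower mass bounds, since the $H^1$ norm dominates the $L^2$ norm.
\item On the $\overline{\bm{y}}$ side: $\norm{q_h}^2\ge ch_f^3\abs{\overline{\bm{q}}}^2$, and a lower bound for $\norm{\bm{\lambda}_h}_{-1/2,\Omega_p}$ in terms of $\abs{\overline{\bm{\lambda}}}$.
\end{enumerate}

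The main obstacle is the negative-norm estimate in item 2. For this I will use the inverse inequality on the quasi-uniform mesh $\mathcal{T}_{h_p}$, $\norm{\bm{\lambda}_h}_{-1/2,\Omega_p}\ge c\,h_p^{1/2}\norm{\bm{\lambda}_h}_{\Omega_p}$. It is proved by duality: take $\bm{\psi}=\bm{\lambda}_h$ in the supremum defining the $H^{-1/2}$ norm and use $\norm{\bm{\lambda}_h}_{1/2}\le Ch_p^{-1/2}\norm{\bm{\lambda}_h}$. Combined with \eqref{eq:vec-func_bound}, this gives $\norm{\bm{\lambda}_h}_{-1/2}^2\ge ch_p^3\abs{\overline{\bm{\lambda}}}^2$, and hence $\sigma_\text{min}^2\ge c\min\{h_f^3,h_p^3\}\cdot(\text{the }\overline{\bm{x}}\text{ scaling})$. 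This is the origin of the $\min\{h_f^3,h_p^3\}$ factor in \eqref{eq:B_cond}; the $\overline{\bm{x}}$-side scaling is the source of the $\min\{h_f^3,h_p^2\}$ factor.

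Forming $\kappa(\widetilde{\calbf})^2=\sigma_\text{max}^2/\sigma_\text{min}^2$ and absorbing all generic constants into $\bar{C}$ should yield \eqref{eq:B_cond}. If the powers do not line up exactly, the step I expect to need adjusting is how the interface-trace term is bounded in the $\sigma_\text{max}$ estimate.
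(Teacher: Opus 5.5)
Your strategy is the paper's. You express $\widetilde{\calbf}$ through $b$; the paper makes your ``sign convention'' precise with the orthogonal flip $\caldf_R$, so that $\overline{\bm{x}}^T\calbf_2^T\overline{\bm{y}}=b(\cdot,\cdot)$ and $\kappa(\calbf_2)=\kappa(\widetilde{\calbf})$. You bound $\sigma_{\max}$ by continuity. You bound $\sigma_{\min}$ by Lemma~\ref{lem:infsup} together with $\norm{\bm{x}_h}_X^2\ge c\min\{h_f^3,h_p^2\}\abs{\overline{\bm{x}}}^2$ and $\norm{\bm{y}_h}_Y^2\ge c\min\{h_f^3,h_p^3\}\abs{\overline{\bm{y}}}^2$, the latter via $\norm{\bm{\lambda}_h}_{\Omega_p}\le Ch_p^{-1/2}\norm{\bm{\lambda}_h}_{-1/2,\Omega_p}$ and \eqref{eq:vec-func_bound}. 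Your lower bound is exactly the paper's. One small caveat: your duality proof of that inverse inequality, taking $\bm{\psi}=\bm{\lambda}_h$, needs $\bm{\lambda}_h\in H^{1/2}(\Omega_p)$, i.e.\ continuous multipliers as in the experiments; the paper simply cites the inequality.

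What needs fixing is your predicted upper bound. The claim $\sigma_{\max}^2\le C(h_f+1)(h_f^3+h_p^2)/\min\{h_f^3,h_p^2\}$ is not what your ingredients produce. If you combined it with your $\sigma_{\min}$ bound, you would get $\min\{h_f^3,h_p^2\}^2$ in the denominator, a weaker estimate than \eqref{eq:B_cond}: for $h_f=h_p=h$ it gives $\kappa(\widetilde{\calbf})=O(h^{-7/2})$ instead of $O(h^{-2})$. The factor $\min\{h_f^3,h_p^2\}$ belongs only to $\sigma_{\min}$; the correct target is $\sigma_{\max}^2\le C(h_f+1)(h_f^3+h_p^2)$.

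The paper gets this from $b\le C^*\norm{\bm{x}_h}_X\norm{\bm{y}_h}_Y$, using the $H^1$ trace and $\norm{\cdot}_{-1/2,\Omega_p}\le\norm{\cdot}_{\Omega_p}$. It combines this with $\norm{\bm{x}_h}_X^2\le C(h_f+1)\abs{\overline{\bm{x}}}^2$ (Poincar\'e, inverse inequality, \eqref{eq:vec-func_bound}) and $\norm{\bm{y}_h}_Y^2\le C(h_f^3+h_p^2)\abs{\overline{\bm{y}}}^2$.

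Your per-term route with the discrete trace inequality $\norm{\bm{v}_h}_{\Omega_p}\le Ch_f^{-1/2}\norm{\bm{v}_h}_{\Omega_f}$ also works, and is actually sharper. It yields $\sigma_{\max}\le C(h_f^2+h_fh_p+h_p^2)\le C(h_f^2+h_p^2)$, and $(h_f^2+h_p^2)^2\le C(h_f+1)(h_f^3+h_p^2)$ because the mesh sizes are bounded. So nothing about the interface-trace term needs adjusting; only the bookkeeping of where $\min\{h_f^3,h_p^2\}$ enters.
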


\begin{proof}
    Define the spaces $X = \bm{U}_h \times W_h$ and $Y = Q_h \times \bm{G}_h$ with norms defined by
    \begin{equation*}
        \norm{(\bm{v}_h, \eta_h)}_{X}^2 \coloneqq \norm{\bm{v}_h}_{1, \Omega_f}^2 + \norm{\eta_h}_{1, \Omega_p}^2 \quad \text{and} \quad \norm{(q_h, \bm{\lambda}_h)}_Y^2 \coloneqq \norm{q_h}_{\Omega_f}^2 + \norm{\bm{\lambda}_h}_{-1/2, \Omega_p}^2,
    \end{equation*}
    respectively.  Define the orthogonal matrix
    \begin{equation*}
       \caldf_R \coloneqq \left[\begin{array}{cc}
           - \calif_{N_u} & \bm{0} \\
            \bm{0} & \calif_{N_w}
        \end{array}\right]\in\mathbb{R}^{(N_u+N_w)\times (N_u+N_w)},
    \end{equation*}
    where $\calif_N$ is the identity matrix in $\mathbb{R}^{N\times N}$, and 
    \begin{equation*}
        \calbf_2 \coloneqq  \widetilde{\calbf}\caldf_R = \left[\begin{array}{cc}
            -\calpf & \bm{0} \\
            -\caltf & \calvf
        \end{array}\right].
    \end{equation*}
    We first note that if $\overline{\bm{x}} = (\overline{\bm{u}}_h, \overline{\bm{w}}_h)^T$ and $\overline{\bm{y}} = (\overline{\bm{p}}_h, \overline{\bm{g}}_h)^T$ are the FE coefficients corresponding to $(\bm{u}_h, w_h)\in \bm{U}_h\times W_h$ and $(p_h, \bm{g}_h)\in Q_h \times \bm{G}_h$, respectively, then 
    \begin{equation*}
        \overline{\bm{x}}^T \calbf_2^T\overline{\bm{y}} = b\left((\bm{u}_h, w_h), (p_h, \bm{g}_h)\right).
    \end{equation*}
   Since $\caldf_R$ is an orthogonal matrix, $\sigma_i(\calbf_2) = \sigma_i(\widetilde{\calbf})$, and thus, $\kappa(\calbf_2) = \kappa(\widetilde{\calbf})$. Hence, it suffices to compute for the condition number of $\calbf_2$.
    We first show the boundedness of the bilinear form $b(\cdot, \cdot)$:
    \begin{equation*}
        \begin{aligned}
            b\left((\bm{v}_h, \eta_h), (q_h, \bm{\lambda}_h)\right) &= -(\nabla\cdot\bm{v}_h, q_h)_{\Omega_f} - \dual{\bm{v}_h, \bm{\lambda}_h}_{\Omega_p} + \dual{\eta_h \bm{n}, \bm{\lambda}_h}_{\Omega_p} \\
            &\leq \norm{\nabla\cdot\bm{v}_h}_{\Omega_f}\norm{q_h}_{\Omega_f} + \norm{\bm{v}_h}_{1/2, \Omega_p}\norm{\bm{\lambda}_h}_{-1/2, \Omega_p}\\
            & \qquad+ \norm{\eta_h}_{1/2, \Omega_p}\norm{\bm{\lambda}_h}_{-1/2, \Omega_p} \\
            &\leq C\left(\norm{\bm{v}_h}_{1, \Omega_f} + \norm{\eta_h}_{1, \Omega_p}\right)\left(\norm{q_h}_{\Omega_f} + \norm{\bm{\lambda}_h}_{-1/2, \Omega_p}\right) \\
            &\leq 2C \left(\norm{\bm{v}_h}_{1, \Omega_f}^2 + \norm{\eta_h}_{1, \Omega_p}^2\right)^{1/2}\left(\norm{q_h}_{\Omega_f}^2 + \norm{\bm{\lambda}_h}_{-1/2, \Omega_p}^2\right)^{1/2}\\
            &= C^*\norm{(\bm{v}_h, \eta_h)}_{X}\norm{(q_h, \bm{\lambda}_h)}_Y.
        \end{aligned}
    \end{equation*}
    Note that the smallest and largest singular values of $\calbf_2$ are given by
    \begin{equation*}
        \sigma_\text{min}(\calbf_2^T) = \min_{\overline{\bm{y}}\in \mathbb{R}^{N_p + N_g}}\max_{\overline{\bm{x}}\in \mathbb{R}^{N_u + N_w}}\frac{\overline{\bm{x}}^T \calbf_2^T\overline{\bm{y}}}{\abs{\overline{\bm{x}}}\abs{\overline{\bm{y}}}} \quad \text{and} \quad \sigma_\text{max}(\calbf_2^T) = \max_{\overline{\bm{y}}\in \mathbb{R}^{N_p + N_g}}\max_{\overline{\bm{x}}\in \mathbb{R}^{N_u + N_w}}\frac{\overline{\bm{x}}^T \calbf_2^T\overline{\bm{y}}}{\abs{\overline{\bm{x}}}\abs{\overline{\bm{y}}}}.
    \end{equation*}
    Let $\bm{x}_h = (\bm{u}_h, w_h)\in X$ and $\bm{y}_h = (p_h, \bm{g}_h)\in Y$. Then, using Poincar\'{e}-Friedrich inequality, the inverse inequality and \eqref{eq:vec-func_bound}, 
    \begin{equation*}
        \begin{aligned}
            \frac{\norm{\bm{x}_h}_{X}^2}{\abs{\overline{\bm{x}}}^2} &= \frac{\norm{\bm{u}_h}_{1, \Omega_f}^2 + \norm{w_h}_{1,\Omega_p}^2}{\abs{\overline{\bm{u}}_h}^2 + \abs{\overline{\bm{w}}}_h^2} \leq C_P\left(\frac{\norm{\nabla\bm{u}_h}_{\Omega_f}^2 + \norm{\nabla w_h}_{\Omega_p}^2}{\abs{\overline{\bm{u}}_h}^2 + \abs{\overline{\bm{w}}}_h^2}\right) \\
            & \leq C_P\left(\frac{Ch_f^{-2}\norm{\bm{u}_h}_{\Omega_f}^2}{\abs{\overline{\bm{u}}_h}^2} + \frac{Ch_p^{-2}\norm{w_h}_{\Omega_p}^2}{\abs{\overline{\bm{w}}}_h^2}\right) \\
            & \leq C_P\left(CC_uh_f + CC_u\right) \\
            & \leq C(h_f + 1).
        \end{aligned}
    \end{equation*}
    Similarly, we obtain
    \begin{equation*}
        \begin{aligned}
            \frac{\norm{\bm{y}_h}_{Y}^2}{\abs{\overline{\bm{y}}}^2} = \frac{\norm{p_h}_{\Omega_f}^2 + \norm{\bm{g}_h}_{-1/2,\Omega_p}^2}{\abs{\overline{\bm{p}}_h}^2 + \abs{\overline{\bm{g}}_h}^2} \leq \frac{\norm{p_h}_{\Omega_f}^2}{\abs{\overline{\bm{p}}_h}^2} + \frac{\norm{\bm{g}_h}_{-1/2,\Omega_p}^2}{\abs{\overline{\bm{g}}_h}^2} \leq \frac{\norm{p_h}_{\Omega_f}^2}{\abs{\overline{\bm{p}}_h}^2} + \frac{\norm{\bm{g}_h}_{\Omega_p}^2}{\abs{\overline{\bm{g}}_h}^2} \leq C_u(h_f^3 + h_p^2).
        \end{aligned}
    \end{equation*}
    Hence, combining the two bounds and the continuity of $b(\cdot, \cdot)$,  we get an upper bound for $\sigma_\text{max}(\calbf_2)$:
    \begin{equation*} 
        \frac{\overline{\bm{x}}^T \calbf_2^T\overline{\bm{y}}}{\abs{\overline{\bm{x}}}\abs{\overline{\bm{y}}}} = \frac{b\left((\bm{u}_h, w_h), (p_h, \bm{g}_h)\right)}{\abs{\overline{\bm{x}}}\abs{\overline{\bm{y}}}} \leq \frac{C^*\norm{\bm{x}_h}_{X}\norm{\bm{y}_h}_Y}{\abs{\overline{\bm{x}}}\abs{\overline{\bm{y}}}} \leq C(h_f + 1)^{1/2}(h_f^3 + h_p^2)^{1/2},
    \end{equation*}
    and
    \begin{equation*}
        \sigma_\text{max}(\calbf_2^T) \leq C(h_f + 1)^{1/2}(h_f^3 + h_p^2)^{1/2},
    \end{equation*}
    for some $C>0$ independent of the mesh sizes, $h_f$ and $h_p$, and the time step $\delta t$. Now, we observe that using the inverse inequality \cite{de_Castro2025, Temam1984},  $\norm{\bm{\lambda}_h}_{\Omega_p} \leq Ch_p^{-1/2}\norm{\bm{\lambda}_h}_{-1/2, \Omega_p}$, 
  and \eqref{eq:vec-func_bound}, 
    we obtain
    \begin{equation*}
        \begin{aligned}
            \frac{\norm{\bm{x}_h}_{X}^2\norm{\bm{y}_h}_Y^2}{\abs{\overline{\bm{x}}}^2\abs{\overline{\bm{y}}}^2} &= \left(\frac{(\norm{\bm{u}_h}_{1, \Omega_f}^2 + \norm{w_h}_{1, \Omega_p}^2}{\abs{\overline{\bm{u}}_h}^2 + \abs{\overline{\bm{w}}_h}^2}\right)\left(\frac{\norm{p_h}_{\Omega_f}^2 + \norm{\bm{\lambda}_h}_{-1/2, \Omega_p}^2}{\abs{\overline{\bm{p}}_h}^2 + \abs{\overline{\bm{g}}_h}^2}\right) \\
            &\geq \left(\frac{\norm{\bm{u}_h}_{\Omega_f}^2 + \norm{w_h}_{\Omega_p}^2}{\abs{\overline{\bm{u}}_h}^2 + \abs{\overline{\bm{w}}_h}^2}\right)\left(\frac{\norm{p_h}_{\Omega_f}^2 + C^{-1}h_p\norm{\bm{\lambda}_h}_{\Omega_p}^2}{\abs{\overline{\bm{p}}_h}^2 + \abs{\overline{\bm{g}}_h}^2}\right) \\
            &\geq \left(\frac{C_\ell h_f^3\abs{\overline{\bm{u}}_h}^2 + C_\ell h_p^2\abs{\overline{\bm{w}}_h}^2}{\abs{\overline{\bm{u}}_h}^2 + \abs{\overline{\bm{w}}_h}^2}\right)\left(\frac{C_\ell h_f^3\abs{\overline{\bm{p}}_h}^2 + C^{-1}C_\ell h_p^3\abs{\overline{\bm{g}}_h}^2}{\abs{\overline{\bm{p}}_h}^2 + \abs{\overline{\bm{g}}_h}^2}\right) \\
            & \geq C\min\{h_f^3, h_p^2\}\min\{h_f^3, h_p^3\}.
        \end{aligned}
    \end{equation*}
    Now, using \eqref{eq:full_disc_infsup}, we get
    \begin{equation*}
        \begin{aligned}
            \sigma_\text{min}(\calbf_2^T) &= \min_{\overline{\bm{y}} \in \mathbb{R}^{N_p+N_g}}\max_{\overline{\bm{x}}\in\mathbb{R}^{N_u+N_w}}\frac{\overline{\bm{x}}^T \calbf_2^T\overline{\bm{y}}}{\abs{\overline{\bm{x}}}\abs{\overline{\bm{y}}}} \\
            &= \inf_{\bm{y}_h\in Y}\sup_{\bm{x}_h\in X}\frac{b\left((\bm{u}_h, w_h), (p_h, \bm{g}_h)\right)}{\abs{\overline{\bm{x}}}\abs{\overline{\bm{y}}}} \\ 
            &= \inf_{\bm{y}_h\in Y}\sup_{\bm{x}_h\in X}\frac{b\left(\bm{x}_h, \bm{y}_h\right)}{\norm{\bm{x}_h}_{X}\norm{\bm{y}_h}_Y} \cdot \frac{\norm{\bm{x}_h}_{X}\norm{\bm{y}_h}_Y}{\abs{\overline{\bm{x}}}\abs{\overline{\bm{y}}}} \\
            & \geq \tilde{C}\beta\left(\min\{h_f^3, h_p^2\}\min\{h_f^3, h_p^3\}\right)^{1/2},
        \end{aligned}
    \end{equation*}
    and thus, 
    \begin{equation*}
        \sigma_\text{min}(\calbf_2^T) \geq \tilde{C}\beta\left(\min\{h_f^3, h_p^2\}\min\{h_f^3, h_p^3\}\right)^{1/2}.
    \end{equation*}
    From the definition of the condition number we have
    \begin{equation*}
        \kappa(\widetilde{\calbf}) = \kappa(\calbf_2) \leq \bar{C}\left(\frac{(h_f + 1)(h_f^3 + h_p^2)}{\min\{h_f^3, h_p^2\}\min\{h_f^3, h_p^3\}}\right)^{1/2}.
    \end{equation*}
\end{proof}

Hence, from Lemmas~\ref{lem:wtilde_cond}-\ref{lem:btilde_cond}, we have the following estimate for the condition number of the Schur complement system $S$.

\begin{theorem} \label{thm:S_cond}
    There exists a constant $C$, independent of the mesh sizes, $h_f$ and $h_p$, and time step $\delta t$ such that
    \begin{equation}\label{eq:S_cond}
        \kappa(\calsf) \leq C \frac{\max\left\{C_u(h_f^3 + \delta th_f^3 + \delta t h_f), c_2(h_p^2 + 1 + (\delta t)^2(h_p^2 + 1 + h_p^{-2}))\right\}(h_f + 1)(h_f^3 + h_p^2)}{\min\left\{c_uh_f^3(1 + \delta t), c_1(1 + (\delta t)^2)h_p^2\right\}\min\{h_f^3, h_p^2\}\min\{h_f^3, h_p^3\}}.
    \end{equation}
\end{theorem}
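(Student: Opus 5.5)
The bound in Theorem~\ref{thm:S_cond} follows by combining the factorization of the Schur complement with the two preceding lemmas. The starting point is the representation \eqref{eq:S-definition1}, $\calsf = \widetilde{\calbf}\widetilde{\calwf}^{-1}\widetilde{\calbf}^T$, together with the inequality \eqref{eq:cond_number_reform}, $\kappa(\calsf)\le (\kappa(\widetilde{\calbf}))^2\kappa(\widetilde{\calwf})$.

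For completeness, I would first recall why \eqref{eq:cond_number_reform} holds. Since $\widetilde{\calwf}$ is spd, it is block diagonal with the spd blocks $\calwf_f$ and $\widetilde{\calwf}_p$. Since $\widetilde{\calbf}$ has full row rank by Lemma~\ref{lem:infsup}, the matrix $\calsf$ is spd. For any $\overline{\bm{y}}$, set $\overline{\bm{z}}=\widetilde{\calbf}^T\overline{\bm{y}}$. The Rayleigh–Ritz relations \eqref{eq:RRT-1}–\eqref{eq:RRT-2} then give
\begin{equation*}
\frac{\sigma_{\min}(\widetilde{\calbf})^2}{\lambda_{\max}(\widetilde{\calwf})}\abs{\overline{\bm{y}}}^2\le \overline{\bm{y}}^T\calsf\overline{\bm{y}}\le \frac{\sigma_{\max}(\widetilde{\calbf})^2}{\lambda_{\min}(\widetilde{\calwf})}\abs{\overline{\bm{y}}}^2 .
\end{equation*}
Taking the ratio of the extreme eigenvalues yields $\kappa(\calsf)\le \kappa(\widetilde{\calbf})^2\kappa(\widetilde{\calwf})$.

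The main step is substitution. Squaring the estimate \eqref{eq:B_cond} of Lemma~\ref{lem:btilde_cond} removes the square root and gives
\begin{equation*}
\kappa(\widetilde{\calbf})^2\le \bar C^2\,\frac{(h_f+1)(h_f^3+h_p^2)}{\min\{h_f^3,h_p^2\}\min\{h_f^3,h_p^3\}} .
\end{equation*}
Multiplying this by the bound \eqref{eq:W_cond} of Lemma~\ref{lem:wtilde_cond} produces exactly the structure of \eqref{eq:S_cond}, with $C=\bar C^2$. The constants $C_f,C_p,c_f,c_p$ of Lemma~\ref{lem:wtilde_cond} are relabeled as $C_u,c_2,c_u,c_1$ in the statement. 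All of these constants are independent of $h_f$, $h_p$, and $\delta t$, so the product constant is as well.

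The only delicate point is making sure \eqref{eq:cond_number_reform} is legitimately applicable. This requires that the singular values of $\widetilde{\calbf}$ used in Lemma~\ref{lem:btilde_cond} are those relevant to $\calsf$. The lower bound $\sigma_{\min}$ there is stated for $\calbf_2^T$, i.e.\ the minimum over $\overline{\bm{y}}$, which is the quantity that controls $\min_{\overline{\bm{y}}}\abs{\widetilde{\calbf}^T\overline{\bm{y}}}/\abs{\overline{\bm{y}}}$ in the display above. I would remark on this explicitly, and note that the orthogonal factor $\caldf_R$ does not change these singular values. With that verified, the theorem is immediate.
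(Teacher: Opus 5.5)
Your proposal is correct and follows the paper's route: the theorem is just \eqref{eq:cond_number_reform} combined with the square of the bound in Lemma~\ref{lem:btilde_cond} and the bound in Lemma~\ref{lem:wtilde_cond}, with $C=\bar C^2$ and the constants relabeled. Your one addition is the Rayleigh-quotient derivation of $\kappa(\calsf)\le\kappa(\widetilde{\calbf})^2\kappa(\widetilde{\calwf})$, which the paper imports from de Castro et al. That derivation is sound: it uses the full row rank of $\widetilde{\calbf}$ and $\min_{\overline{\bm{y}}}|\widetilde{\calbf}^T\overline{\bm{y}}|/|\overline{\bm{y}}|$, which is exactly the quantity bounded below in the lemma.
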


The estimate in Theorem~\ref{thm:S_cond} also reveals that the dominant contribution to the condition number arises from the factor $\kappa(\widetilde{\calbf})^2$. 
If $h= h_f = h_p $, for example,  $\kappa(\widetilde{\calbf})=O(h^{-2})$, and this term contributes $O(h^{-4})$ to the growth of $\kappa(\calsf)$, while the remaining factor $\kappa(\widetilde{\calwf})$ contributes only $O(h^{-3})$. Note that although $\delta t$ appears in the estimate in \eqref{eq:S_cond}, $\kappa(\calsf)$ does not depend on $\delta t$. Thus, the ill-conditioning of the Schur complement is driven primarily by the interface and divergence matrices. This observation motivates considering a preconditioner of the form $\widetilde{\calbf}\widetilde{\calbf}^{T}$, which is designed to eliminate the dependence on $\kappa(\widetilde{\calbf})^2$. The next theorem shows that this choice yields a significantly better-conditioned Schur complement system.

\begin{theorem} \label{thm:precondS_cond}
Given that
\begin{equation*}
    \calsf = \widetilde{\calbf}\,\widetilde{\calwf}^{-1}\,
    \widetilde{\calbf}^{\,T},
\end{equation*}
where $\widetilde{\calwf}$ and
$\widetilde{\calbf}$ are defined as in \eqref{eq:tilde_matrices}, define the preconditioner
\begin{equation}\label{eq:bbt_precond}
    \calmf = \widetilde{\calbf}\,\widetilde{\calbf}^{\,T}.
\end{equation}
Then
\begin{equation*}
    \kappa\left(\calmf^{-1}\calsf\right)
    \leq \kappa(\widetilde{\calwf}).
\end{equation*}
\end{theorem}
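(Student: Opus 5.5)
The plan is to read $\calmf^{-1}\calsf$ through the generalized eigenvalue problem $\calsf\overline{\bm{y}} = \lambda\calmf\overline{\bm{y}}$ and bound its spectrum with a Rayleigh quotient argument.

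The first step is to check that $\calmf$ is symmetric positive definite, so that $\calmf^{-1}$ exists. This needs $\widetilde{\calbf}$ to have full row rank, i.e. $\sigma_\text{min}(\widetilde{\calbf})>0$. That follows from the proof of Lemma~\ref{lem:btilde_cond}, which rests on the discrete inf-sup condition of Lemma~\ref{lem:infsup}: there $\sigma_\text{min}(\calbf_2^T)=\sigma_\text{min}(\widetilde{\calbf})$ is bounded below by a positive quantity. Hence $\widetilde{\calbf}^T$ is injective, and $\overline{\bm{y}}^T\calmf\overline{\bm{y}} = \abs{\widetilde{\calbf}^T\overline{\bm{y}}}^2>0$ for every $\overline{\bm{y}}\neq \bm{0}$. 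The same injectivity, together with $\widetilde{\calwf}$ being spd (Lemma~\ref{lem:wtilde_cond}, where $\calwf_f$ and $\widetilde{\calwf}_p$ are spd), shows that $\calsf$ is spd.

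Since both $\calsf$ and $\calmf$ are spd, $\calmf^{-1}\calsf$ is similar to $\calmf^{-1/2}\calsf\calmf^{-1/2}$. Equivalently, it is self-adjoint in the $\calmf$-inner product. Its eigenvalues are therefore real and positive, and they are exactly the stationary values of the generalized Rayleigh quotient
\begin{equation*}
R(\overline{\bm{y}}) = \frac{\overline{\bm{y}}^T\widetilde{\calbf}\widetilde{\calwf}^{-1}\widetilde{\calbf}^T\overline{\bm{y}}}{\overline{\bm{y}}^T\widetilde{\calbf}\widetilde{\calbf}^T\overline{\bm{y}}}.
\end{equation*}
In particular, $\lambda_\text{min}(\calmf^{-1}\calsf)$ and $\lambda_\text{max}(\calmf^{-1}\calsf)$ are the minimum and maximum of $R$ over $\overline{\bm{y}}\neq\bm{0}$. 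Accordingly, $\kappa(\calmf^{-1}\calsf)$ is understood as the ratio $\lambda_\text{max}/\lambda_\text{min}$, which coincides with the $2$-norm condition number of the symmetrized operator, i.e. the condition number in the $\calmf$-norm.

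The key step is the substitution $\overline{\bm{x}} = \widetilde{\calbf}^T\overline{\bm{y}}$, which is nonzero by injectivity. It turns the quotient into
\begin{equation*}
R(\overline{\bm{y}}) = \frac{\overline{\bm{x}}^T\widetilde{\calwf}^{-1}\overline{\bm{x}}}{\abs{\overline{\bm{x}}}^2}.
\end{equation*}
By the Rayleigh--Ritz bounds \eqref{eq:RRT-1}, this lies in $[\lambda_\text{min}(\widetilde{\calwf}^{-1}),\lambda_\text{max}(\widetilde{\calwf}^{-1})] = [1/\lambda_\text{max}(\widetilde{\calwf}),\,1/\lambda_\text{min}(\widetilde{\calwf})]$. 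Taking the ratio of the extreme values of $R$ then gives $\kappa(\calmf^{-1}\calsf)\le \lambda_\text{max}(\widetilde{\calwf})/\lambda_\text{min}(\widetilde{\calwf}) = \kappa(\widetilde{\calwf})$.

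The bound is only an inequality because $\overline{\bm{x}}$ ranges over the range of $\widetilde{\calbf}^T$, a proper subspace, rather than all of $\mathbb{R}^{N_u+N_w}$.

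The main obstacle is not computational. It is being precise about the meaning of $\kappa$ for the nonsymmetric matrix $\calmf^{-1}\calsf$, and justifying that its eigenvalues are real and positive and governed by $R$. Both points are handled by the $\calmf$-similarity argument above, which in turn depends on the full row rank of $\widetilde{\calbf}$ inherited from the inf-sup condition.
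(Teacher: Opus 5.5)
Your proposal is correct and follows essentially the same route as the paper's proof: symmetrize via $\calmf^{-1/2}$, substitute $\widetilde{\calbf}^T\overline{\bm{y}}$ to reduce the generalized Rayleigh quotient to that of $\widetilde{\calwf}^{-1}$, and apply Rayleigh--Ritz. Two things you add are left implicit in the paper: deriving the full row rank of $\widetilde{\calbf}$ from the inf-sup lemma, and stating that $\kappa(\calmf^{-1}\calsf)$ means the ratio of extreme eigenvalues.
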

\begin{proof}
    Note that since $\widetilde{\calbf}$ is full row rank, $\calmf$ is spd. This means that $\calmf^{1/2}$ and $\calmf^{-1/2}$ are well-defined through the diagonalization of $\calmf$. Note that they are also symmetric.  It is also easy to see that $\calmf^{-1}\calsf$ and $\calmf^{-1/2}\calsf\calmf^{-1/2}$ are similar matrices. Thus, they have the same eigenvalues, i.e., $\lambda_i(\calmf^{-1}\calsf) = \lambda_i(\calmf^{-1/2}\calsf\calmf^{-1/2})$. \\

    Let $\overline{\bm{z}} \in \mathbb{R}^{N_p + N_g}$ be a nonzero vector. Define $\overline{\bm{y}} = \calmf^{-1/2}\overline{\bm{z}}$ so that
    \begin{equation*}
        \overline{\bm{z}}^T\calmf^{-1/2}\calsf\calmf^{-1/2}\overline{\bm{z}} = \overline{\bm{y}}^T\calsf\overline{\bm{y}}
    \end{equation*}
    and
    \begin{equation*}
        \overline{\bm{z}}^T\overline{\bm{z}} = \overline{\bm{y}}^T\calmf\overline{\bm{y}}.
    \end{equation*}
    Hence, we obtain that
    \begin{equation*}
\frac{\overline{\bm{z}}^T\calmf^{-1/2}\calsf\calmf^{-1/2}\overline{\bm{z}}}{\overline{\bm{z}}^T\overline{\bm{z}}} = \frac{\overline{\bm{y}}^T\calsf\overline{\bm{y}}}{\overline{\bm{y}}^T\calmf\overline{\bm{y}}}.
    \end{equation*}
    Moreover, we can define the vector $\overline{\bm{r}} = \widetilde{\calbf}^T\overline{\bm{y}}$ so that
    \begin{equation*}
        \overline{\bm{y}}^T\calsf\overline{\bm{y}} = \overline{\bm{y}}^T\widetilde{\calbf}\widetilde{\calwf}^{-1}\widetilde{\calbf}^T\overline{\bm{y}} = \overline{\bm{r}}^T\widetilde{\calwf}^{-1}\overline{\bm{r}}
    \end{equation*}
    and 
    \begin{equation*}
        \overline{\bm{y}}^T\calmf\overline{\bm{y}} = \overline{\bm{y}}^T\widetilde{\calbf}\widetilde{\calbf}^T\overline{\bm{y}} = \overline{\bm{r}}^T\overline{\bm{r}}.
    \end{equation*}
    Therefore,
    \begin{equation*}
        \frac{\overline{\bm{z}}^T\calmf^{-1/2}\calsf\calmf^{-1/2}\overline{\bm{z}}}{\overline{\bm{z}}^T\overline{\bm{z}}} = \frac{\overline{\bm{r}}^T\widetilde{\calwf}^{-1}\overline{\bm{r}}}{\overline{\bm{r}}^T\overline{\bm{r}}}.
    \end{equation*}
    Since $\overline{\bm{z}}\neq 0$ and $\calmf^{-1/2}$ is nonsingular, we have $\overline{\bm{y}}\neq 0$. Moreover, since $\widetilde{\calbf}$ has full row rank, $\widetilde{\calbf}^T$ has trivial nullspace, and therefore $\overline{\bm{r}}=\widetilde{\calbf}^T\overline{\bm{y}}\neq 0$. Hence, by the Rayleigh--Ritz theorem,
    \begin{equation*}
        \lambda_{\min}(\widetilde{\calwf}^{-1}) \leq \frac{\overline{\bm{r}}^T\widetilde{\calwf}^{-1}\overline{\bm{r}}}{\overline{\bm{r}}^T\overline{\bm{r}}} \leq \lambda_{\max}(\widetilde{\calwf}^{-1}).
    \end{equation*}
    Since the above Rayleigh quotient bounds hold for every nonzero $\overline{\bm{z}}$, the eigenvalues of  $\calmf^{-1/2}\calsf\calmf^{-1/2}$ lie in the same interval. Therefore,
    \begin{equation*}
        \lambda_{\min}(\widetilde{\calwf}^{-1}) \leq \lambda_i(\calmf^{-1}\calsf) \leq \lambda_{\max}(\widetilde{\calwf}^{-1})
    \end{equation*}
    and
    \begin{equation*}
        \kappa(\calmf^{-1}\calsf) \leq \kappa(\widetilde{\calwf}^{-1}) = \kappa(\widetilde{\calwf}).
    \end{equation*}

\end{proof}

\begin{remark}
    The estimate in Theorem~\ref{thm:S_cond} shows that the condition number of the Schur complement matrix $\calsf$ increases as the mesh is refined and is also sensitive to the combinations between the fluid mesh size $h_f$ and the plate mesh size $h_p$. In particular, if $h_f = h_p = h$, the estimate becomes $\kappa(S) = O(h^{-7})$, since $\kappa(\widetilde{\calwf}) = O(h^{-3})$ and $ \kappa(\widetilde{\calbf}) = O(h^{-2})$. However, Theorem~\ref{thm:precondS_cond} indicates that the preconditioned Schur complement matrix satisfies $\kappa\left(\calmf^{-1}\calsf\right)
    \leq \kappa(\widetilde{\calwf})=O(h^{-3})$. Therefore, the proposed preconditioner significantly improves the conditioning of the Schur complement matrix, which is also verified by the numerical tests presented in the next section. 
\end{remark}

\section{Numerical Results} \label{sec:num_res}

In this section, we evaluate the accuracy and efficiency of the proposed partitioned algorithm through a series of numerical experiments. We first verify the spatial and temporal convergence rates using manufactured solutions. We then investigate the performance of the Schur complement solver by comparing the unpreconditioned system, the proposed preconditioner, and the block Jacobi preconditioner
\begin{equation}
    \calmf \coloneqq
    \begin{bmatrix}
        \calpf\calwf_f^{-1}\calpf^T & \bm{0} \\
        \bm{0} &
        \caltf\calwf_f^{-1}\caltf^T
        + \calvf\calwf_p^{-1}\calvf^T
    \end{bmatrix},
\end{equation}
which is obtained by retaining only the diagonal blocks of the Schur complement matrix
\begin{equation}
    \calsf =
    \begin{bmatrix}
        \calpf\calwf_f^{-1}\calpf^T &
        \calpf\calwf_f^{-1}\caltf^T \\
        \caltf\calwf_f^{-1}\calpf^T &
        \caltf\calwf_f^{-1}\caltf^T
        + \calvf\widetilde{\calwf_p}^{-1}\calvf^T
    \end{bmatrix}.
\end{equation}
The effectiveness of these approaches is evaluated through the condition numbers of the resulting systems and the corresponding GMRES/PGMRES iteration counts. Finally, we consider the deformation of a freely vibrating slanted plate to illustrate the performance of the proposed method on a representative fluid--plate interaction problem. All computations are performed in FreeFem++ \cite{MR3043640}. 

\subsection{Convergence Test}

We first verify the spatial and temporal accuracy of Alg.~\ref{alg:scur_dd_alg} by using manufactured solutions, following previous work \cite{BesabeLee2026}. The computational domains are
\begin{equation*}
    \Omega_f=[0,1]\times[0,1]\times[-1,0] \quad\text{and} \quad \Omega_p=[0,1]\times[0,1]\times\{0\},
\end{equation*}
and the forcing terms are chosen so that the following prescribed fluid velocity $\bm{u}=(u_1, u_2, u_3)^T$, fluid pressure $p$, plate displacement $w$, and auxialiary variable $z$ are satisfied: 
\begin{align*}
    u_1 &= \zeta\left(-\frac{\cos(2\pi x)}{4\pi}+\frac{\cos(4\pi x)}{16\pi}+\frac{3}{16\pi}\right)\sin^2(\pi y)\sin(2\pi y)\left(\frac{\pi}{2}\sin(\pi(z+1))\right)e^{-t}, \\
    u_2 &= 0, \\
    u_3 &= -\zeta\sin^2(\pi x)\sin(2\pi x)\sin^2(\pi y)\sin(2\pi y)\sin^2\left(\frac{\pi}{2}(z+1)\right)e^{-t},\\
    p &= 0, \\
    w &= \zeta\sin^2(\pi x)\sin(2\pi x)\sin^2(\pi y)\sin(2\pi y)e^{-t},\\
    z & = - \zeta\left(3\pi^2\sin(4\pi x) - 4\pi^2\sin^2(\pi x)\sin(2\pi x)\right)\sin^2(\pi y)\sin(2\pi y)e^{-t} \\
    &\quad -\zeta\sin^2(\pi x)\sin(2\pi x)\left(3\pi^2\sin(4\pi y) - 4\pi^2\sin^2(\pi y)\sin(2\pi y)\right)e^{-t}.
\end{align*}

The fluid velocity $\bm{u}$ and fluid pressure $p$ are discretized using the inf-sup stable Taylor-Hood $\mathbb{P}_2-\mathbb{P}_1$ element pair. The plate displacement $w$ and auxiliary variable $z$ are approximated using continuous $\mathbb{P}_2$ elements, while the Lagrange multiplier $\bm{g}$ is approximated using continuous $\mathbb{P}_1$ elements. For the spatial convergence, we fix the time step $\delta t = 1e-04$ and final time $T = 1e-03$, and refine $h_f$ and $h_p$. For the errors, we introduce the notation $e_{\Phi}^n = \Phi(t^n) - \Phi_h^n$ to be the difference between the exact solution $\Phi$ and the discrete solution $\Phi_h$ at time $t^n$. For brevity, the mesh sizes for the fluid and plate domains are taken to be the same, i.e, $h_f = h_p = h$. To prevent excessively large forcing terms associated with the biharmonic contribution, we scale the manufactured solution by choosing $\zeta = (60\pi^4)^{-1}$. 
All other parameters are set to 1. Additionally, we set the GMRES tolerance to $1e-08$.

In Fig.~\ref{fig:space_error}, we display the errors for the fluid and plate variables with respect to changes in the mesh size $h$. We note that the slopes match the optimal convergence rates, i.e., $O(h^3)$ for $\|e_u^N\|_{\Omega_f}, \|e_w^N\|_{\Omega_p}, \|e_z^N\|_{\Omega_p}$ and $O(h^2)$ for $\|e_u^N\|_{1,\Omega_f}, \|e_p^N\|_{\Omega_f}, \|e_w^N\|_{1,\Omega_p}, \|e_z^N\|_{1,\Omega_p}$. 

\begin{figure}[htb!]
    \centering
    \begin{subfigure}{0.35\linewidth}
        \includegraphics[width = \linewidth]{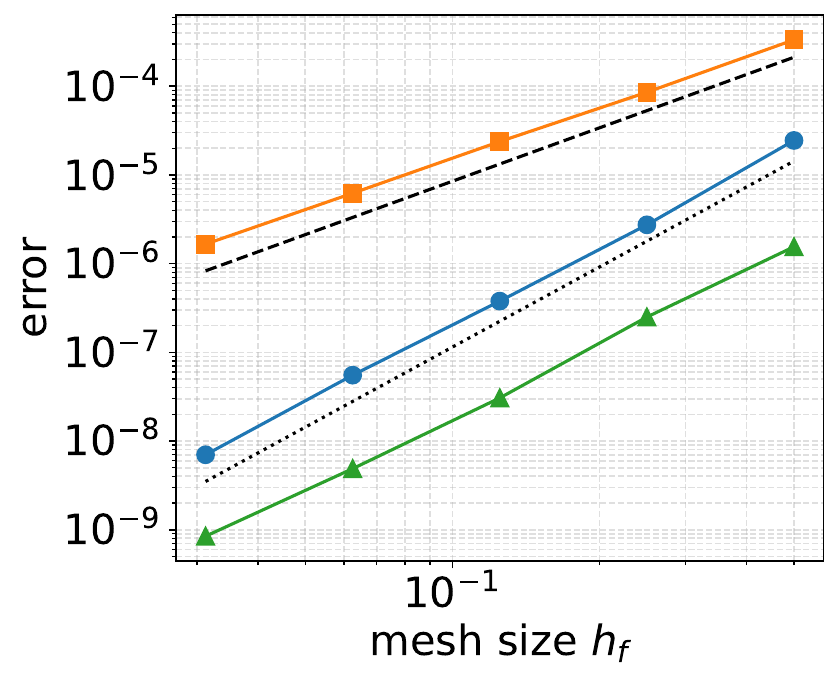}
    \end{subfigure}
    \begin{subfigure}{0.35\linewidth}
        \includegraphics[width = \linewidth]{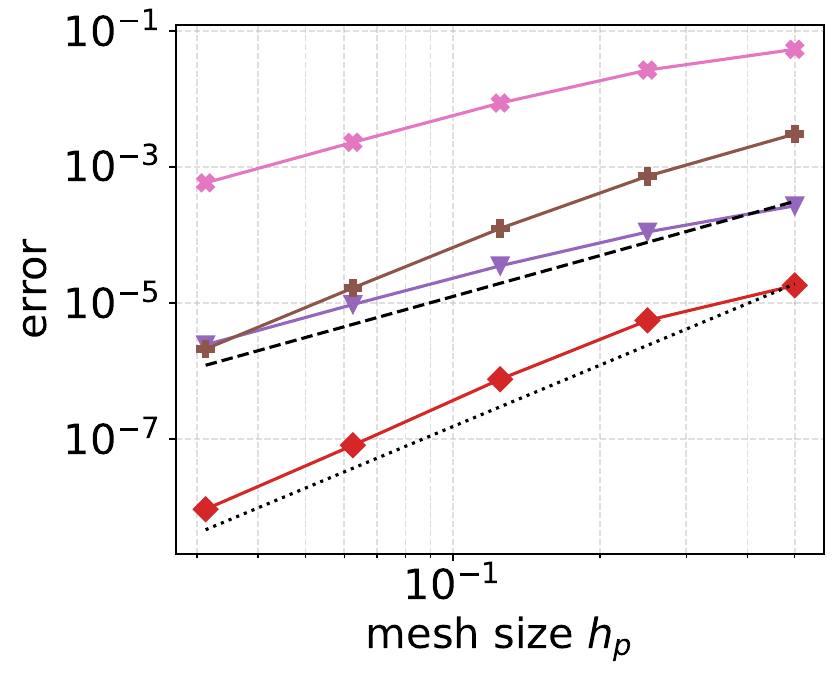}
    \end{subfigure}
    \begin{subfigure}{0.65\linewidth}
        \includegraphics[width = \linewidth]{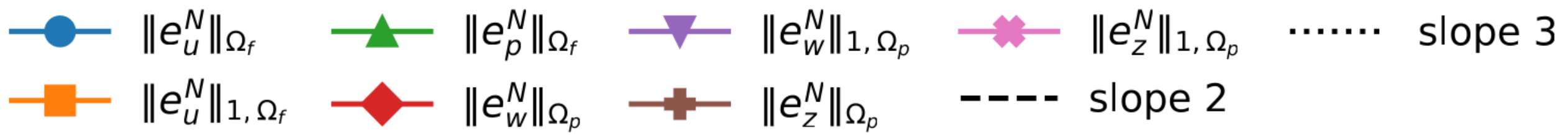}
    \end{subfigure}
    \caption{Spatial convergence: $L^2$ and $H^1$ error for fluid (left) and plate (right) variables for different mesh sizes with $\delta t = 1e-04$ and $T = 1e-03$.}
    \label{fig:space_error}
\end{figure}

Table~\ref{tab:preconditioner_comparison} compares the performance of the unpreconditioned Schur complement system with the proposed preconditioner, $\calmf = \calbf\calbf^T$, and the block Jacobi preconditioner as the mesh is refined. Without preconditioning, the condition number of the Schur complement matrix $\calsf$ increases rapidly with decreasing mesh size, leading to a dramatic growth in the number of GMRES iterations and demonstrating the poor scalability of the unpreconditioned system. In contrast, the proposed preconditioner significantly reduces the condition number, maintaining it at approximately $20$-$30$ across all mesh sizes. As a result, the number of PGMRES iterations remains nearly mesh independent, varying only between $32$ and $92$. Although the block Jacobi preconditioner also improves the conditioning relative to the unpreconditioned system, its condition number grows steadily under mesh refinement and requires substantially more GMRES iterations. These results demonstrate that the proposed preconditioner provides a considerably more robust and scalable solver for the Schur complement system than the block Jacobi approach.

\begin{table}[htb!]
    \centering
    \small
    \begin{tabular}{|c|c|c|cc|cc|}
    \hline
    & & &
    \multicolumn{2}{c|}{$\calmf=\widetilde{B}\widetilde{B}^T$} &
    \multicolumn{2}{c|}{Block Jacobi} \\
    \cline{4-7}
    $h$ & $\kappa(\calsf)$ & GMRES &
    $\kappa(\calmf^{-1}\calsf)$ & PGMRES &
    $\kappa(\calmf^{-1}\calsf)$ & PGMRES \\
    \hline
    $1/2$  & $1.76e+04$ & $34$    & $1.99e+01$ & $32$ & $3.50e+01$ & $36$ \\
    $1/4$  & $4.92e+04$ & $12033$ & $2.62e+01$ & $64$ & $7.18e+01$ & $344$ \\
    $1/8$  & $1.29e+05$ & $15145$ & $2.59e+01$ & $86$ & $9.16e+01$ & $413$ \\
    $1/16$ & $2.91e+05$ & $17478$ & $2.19e+01$ & $92$ & $1.28e+02$ & $205$ \\
    $1/32$ & $5.01e+05$ & $20067$ & $2.64e+01$ & $73$ & $1.45e+02$ & $145$ \\
    \hline
    \end{tabular}
    \caption{Spatial convergence: comparison of the proposed preconditioner $M=BB^T$ and the block Jacobi preconditioner for the Schur complement system. We report the condition number $\kappa(\calsf)$ of the Schur complement matrix $\calsf$ and the preconditioned systems, $\kappa(M^{-1}S)$, together with the corresponding average GMRES/PGMRES iteration counts for different mesh sizes $h$ with $\delta t = 1e-04$ and $T = 1e-03$.}
    \label{tab:preconditioner_comparison}
\end{table}

Figure~\ref{fig:wdot_u_comp} compares the computed plate velocity $\dot{w}$ with the normal component of the fluid velocity, $\mathbf{u}\cdot\mathbf{n}$, on the plate $\Omega_p$ at the final time instant $t = 1e-03$ with mesh size $h = 1/32$. The two solutions are visually indistinguishable over the entire interface, indicating that the kinematic coupling condition is satisfied. The rightmost panel displays the pointwise absolute difference between the two quantities, whose maximum value is approximately $10^{-8}$, consistent with the prescribed GMRES stopping tolerance.

\begin{figure}
    \centering
    \begin{subfigure}{0.34\textwidth}
        \includegraphics[width = \textwidth]{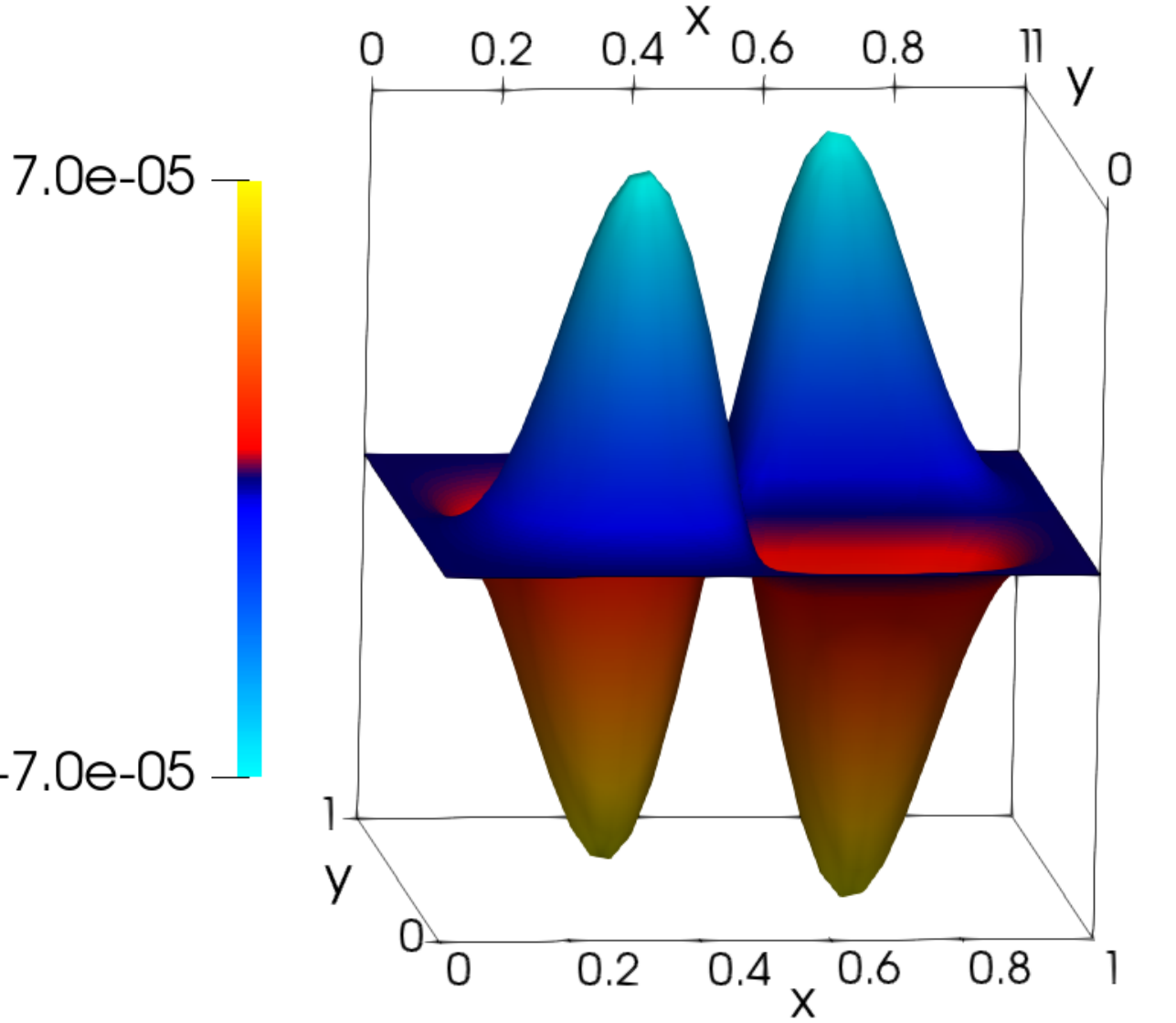}
    \end{subfigure}
    \begin{subfigure}{0.275\textwidth}
        \includegraphics[width = \textwidth]{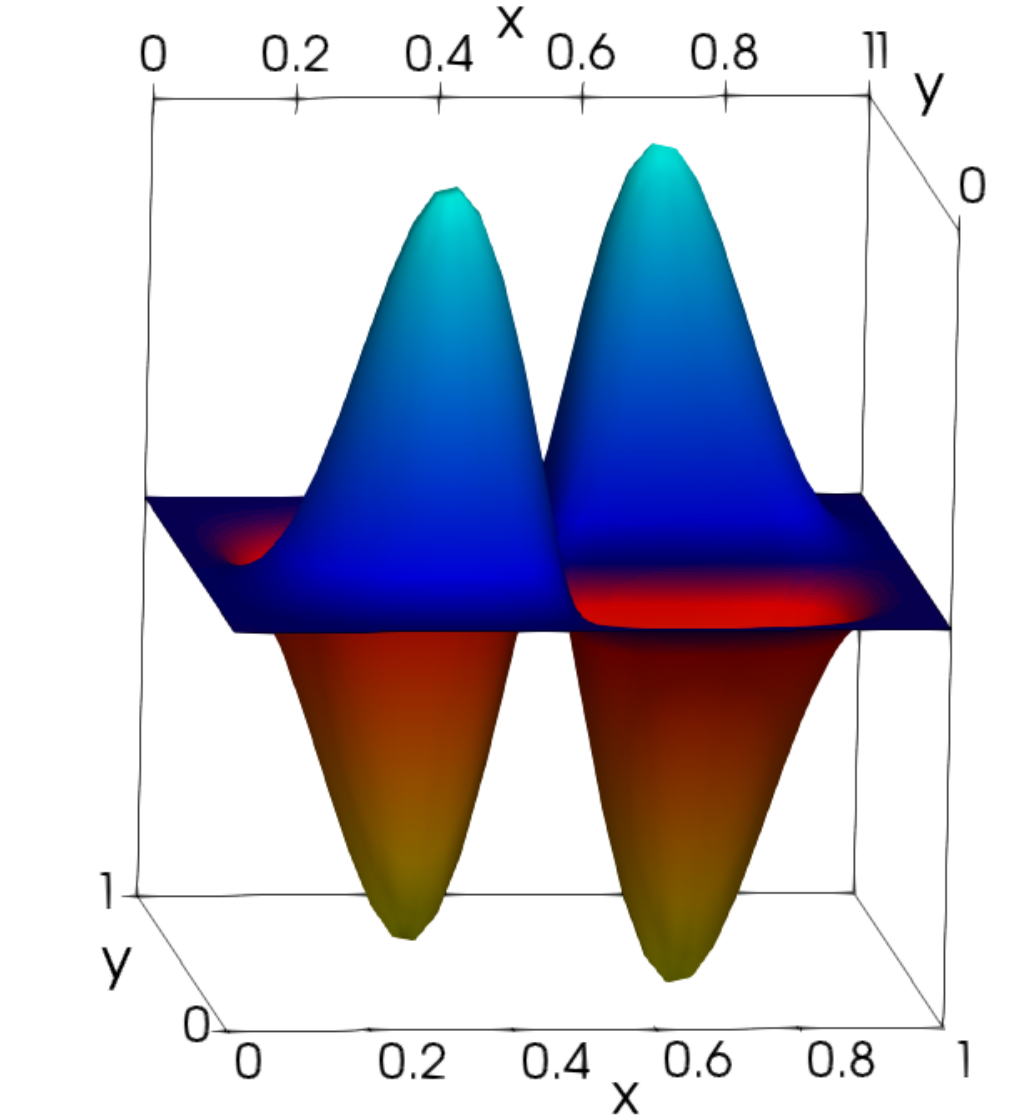}
    \end{subfigure}
    \begin{subfigure}{0.34\textwidth}
        \includegraphics[width = \textwidth]{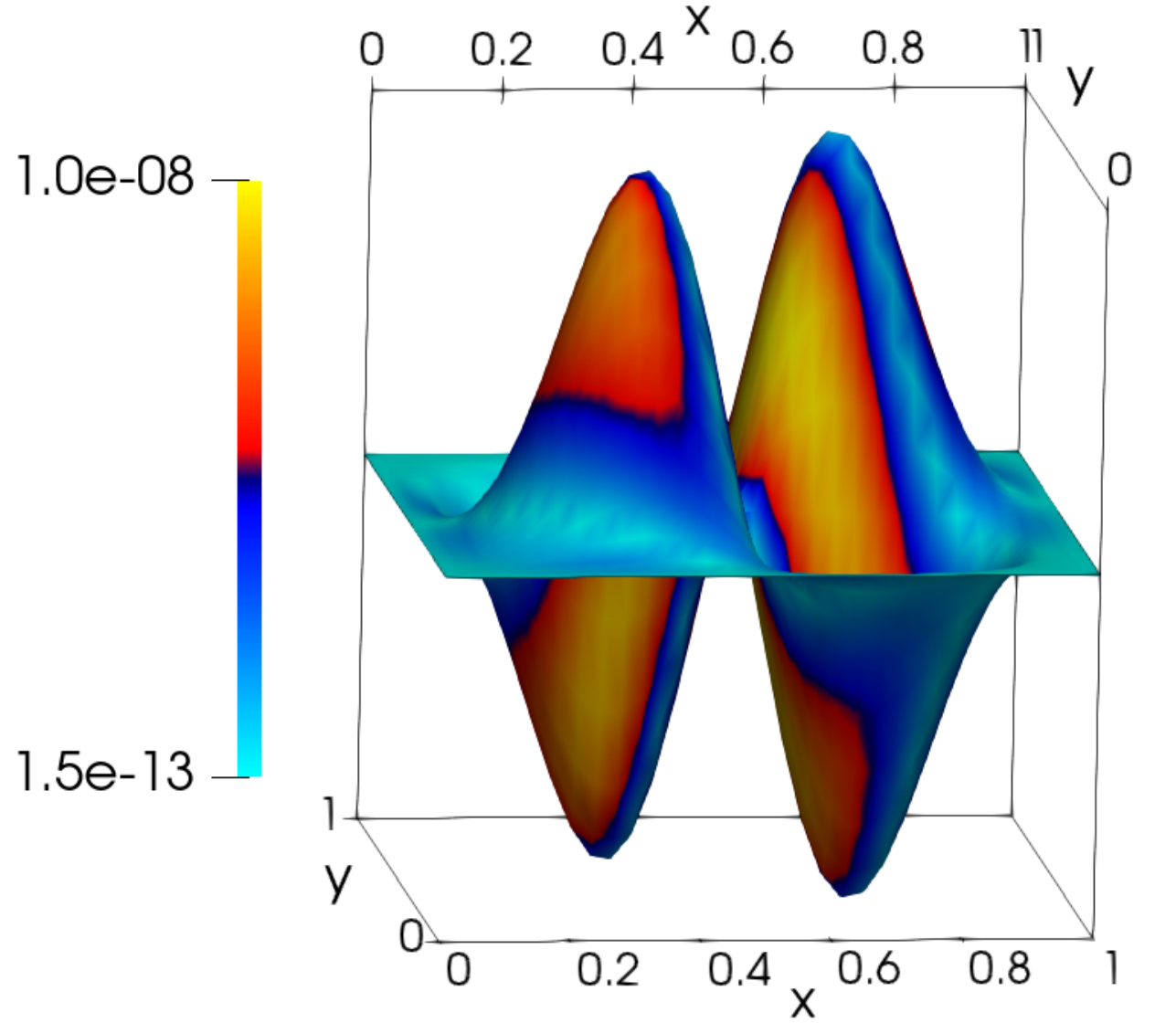}
    \end{subfigure}
    \caption{Convergence test: velocity of the plate $\dot{w}$ (left), normal component of the velocity $\bm{u}\cdot\bm{n}$ on the plate $\Omega_p$ (center), and their absolute difference (right) at $t = 1e-03$ with $h_p = h_f = 1/32$. Note that the warping corresponds to the computed $w$.}
    \label{fig:wdot_u_comp}
\end{figure}

To evaluate the temporal convergence of the proposed method, we fix the spatial mesh to $h_f = h_p = h=1/16$, set the final time to $T=1$, and successively refine the time step $\delta t$. As reported in \cite{Geredeli_Kunwar_Lee2024}, the temporal error remains dominated by the spatial discretization error unless the contribution of the latter is reduced. Following the strategy in \cite{Geredeli_Kunwar_Lee2024}, we set the plate density to $\rho_p=10^5$ in the plate equation \eqref{eq:plate_eq} while leaving all remaining parameters unchanged as in the spatial convergence test. This modification helps isolate the temporal discretization error. Figure~\ref{fig:time_error} shows that the errors for the fluid and plate variables decrease linearly with respect to $\delta t$, in agreement with the theoretical first-order accuracy of the backward Euler scheme.

\begin{figure}[htb!]
    \centering
    \begin{subfigure}{0.35\linewidth}
        \includegraphics[width = \linewidth]{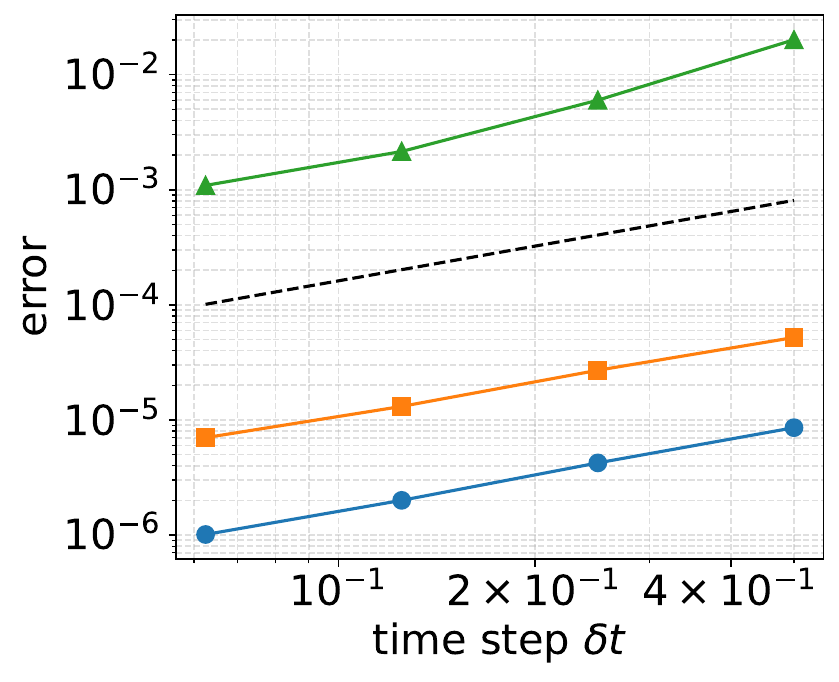}
    \end{subfigure}
    \begin{subfigure}{0.35\linewidth}
        \includegraphics[width = \linewidth]{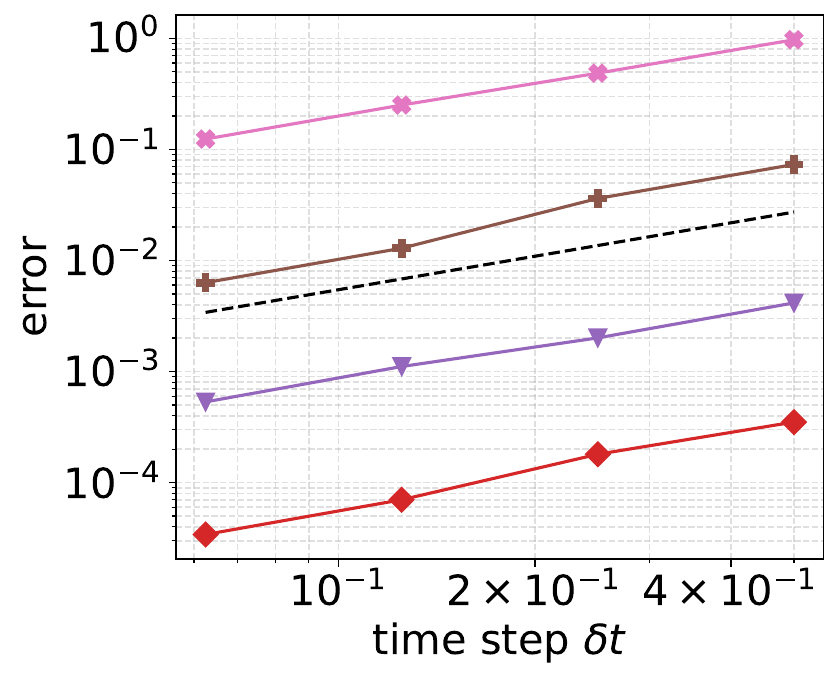}
    \end{subfigure}
    \begin{subfigure}{0.6\linewidth}
        \includegraphics[width = \linewidth]{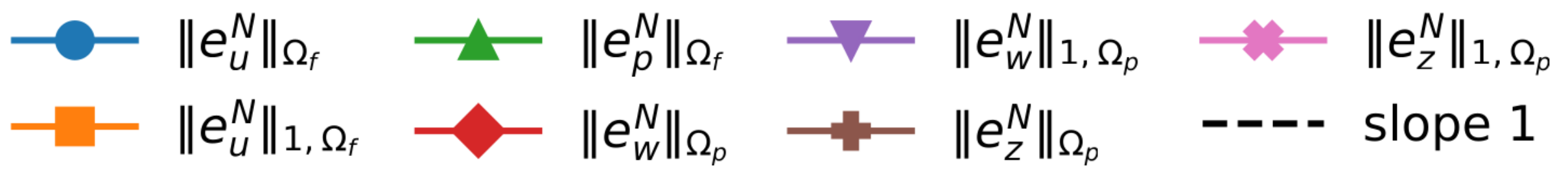}
    \end{subfigure}
    \caption{Temporal convergence: $L^2$ and $H^1$ error for fluid (left) and plate (right) variables for different time steps with $h_f = h_p = h = 1/16$ and $\omega = 1e5$.}
    \label{fig:time_error}
\end{figure}

\subsection{Free Vibration of a Slanted Plate}

To further assess the performance of the proposed partitioned algorithm, we consider the free vibration of a slanted elastic plate interacting with a viscous incompressible fluid, inspired by \cite{Nguyen2021}. The computational domain consists of a 3D fluid region with an inclined upper boundary, while the plate occupies the slanted interface. The fluid domain is given by
\begin{equation*}
    \Omega_f = \{(x, y, z): 0\leq x\leq 1, 0\leq y\leq 1, 0 \leq z\leq h(x)\}
\end{equation*}
where
\begin{equation*}
    h(x) = h_\text{min} + x\tan\theta,
\end{equation*}
where $h_\text{min} = 1 - \theta$, and we set $\theta = \pi/6$. Note that if $\theta = 0$, then the plate becomes horizontal. Consequently, the plate occupies the inclined surface
\begin{equation*}
    \Omega_p = \{(x, y, h(x)): 0\leq x,y\leq 1\}.
\end{equation*}
The fluid is initially at rest, and no external body forces are applied to the fluid. The plate is prescribed a nonzero initial displacement with zero initial velocity,
\begin{equation*}
    w = w_0 = A\sin(2\pi x)\sin(2\pi y), \quad \partial_{t}w = w_{t0} = 0
\end{equation*}
where $A = 1e-02$. We use a mesh size of $h_f = h_p = h = 1/16$, a time step of $\delta t = 1e-3$, and an end time of $T = 1e-1$. Additionally, we fix the following physical parameters:
\begin{equation*}
    \rho_f = 1, \quad \nu_f = 1e-3, \quad \rho = 1e-3,
\end{equation*}
while the plate density $\rho_p$ is varied to explore the performance of the proposed algorithm in the added-mass regime. As $\rho_p$ decreases, the added-mass effect becomes increasingly noticeable, making the coupled fluid-plate system more challenging to solve. This allows us to assess the robustness of the proposed preconditioner over a range of density ratios between the fluid and plate densities.




We present the displacement $w$ of the plate normal to $\Omega_p$ at time $t = 0.1$ s when we set $\rho_p = 1$ in Fig.~\ref{fig:w_test2}. 

\begin{figure}[htb!]
    \centering
    \includegraphics[width=0.5\linewidth]{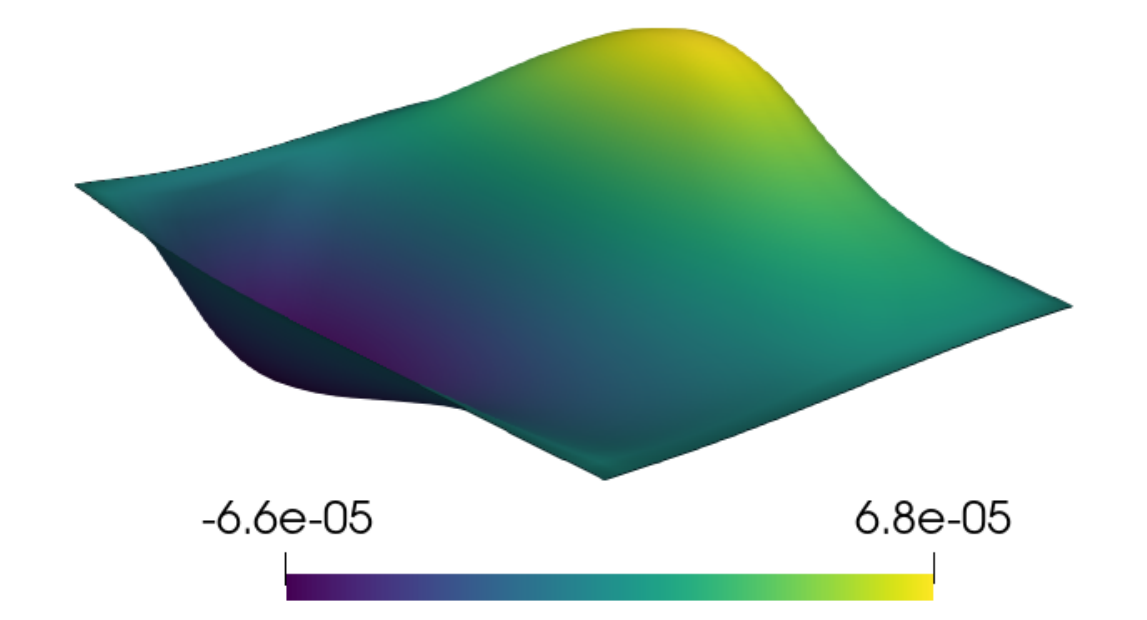}
    \caption{Free vibrating plate: displacement $w$ of the plate normal to $\Omega_p$ at time $t = 0.1$ s when $\rho_p = 1e-2.$ Note that the deformation and the colorbar are according to the values of $w$ and the the plate is rotated for convenience.}
    \label{fig:w_test2}
\end{figure}

In Fig.~\ref{fig:exp2_wdot_u3}, we report the plate velocity and the normal component of the fluid velocity at the interface for varying plate densities. As the plate density decreases, the density ratio between the fluid and structure approaches the added-mass regime, producing increasingly strong fluid–structure coupling. Nevertheless, the computed fluid and structural velocities remain visually indistinguishable across all cases, indicating that the proposed Schur complement formulation accurately enforces the kinematic interface condition independently of the density ratio. Although the interface error increases for the smallest values of $\rho_p$, it remains several orders of magnitude smaller than the magnitude of the solution, demonstrating the robustness of the proposed formulation even in challenging added-mass regimes.

\begin{figure}[htb!]
    \centering
    \begin{tabular}{cccc}
         & $\dot{w}(t)$ & $\bm{u}\cdot\bm{n}$ & $\abs{\dot{w}( t) - \bm{u}\cdot\bm{n}|_{\Omega_p}}$ \\
        $\rho_p = 1$ & \includegraphics[align=c, width = 0.25\textwidth]{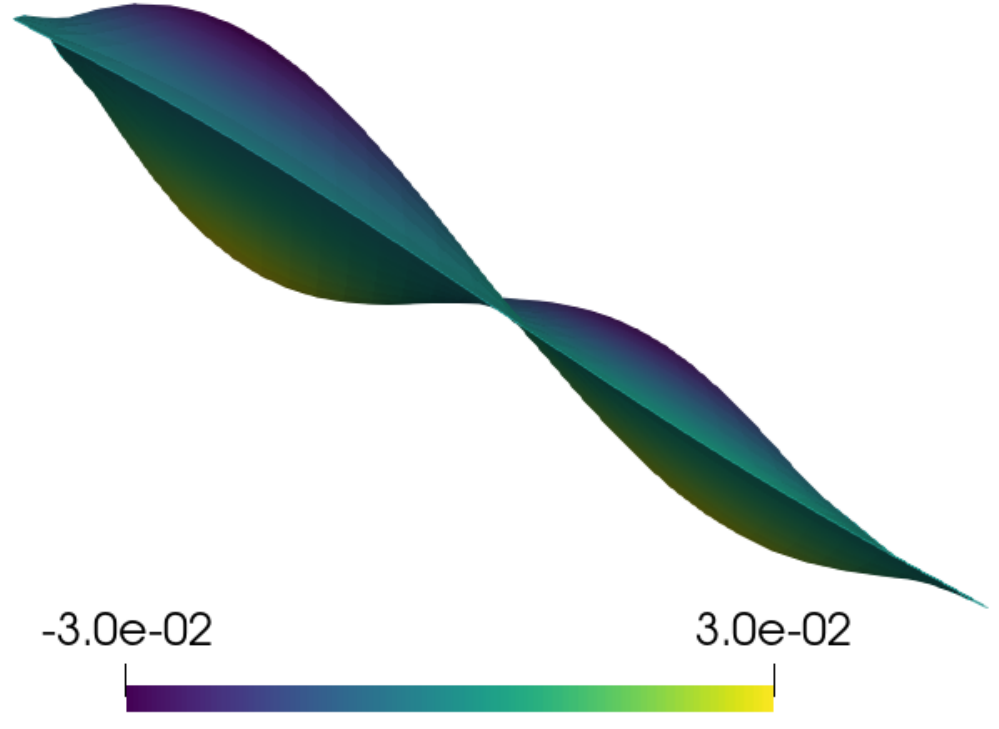} & \includegraphics[align=c, width = 0.25\textwidth]{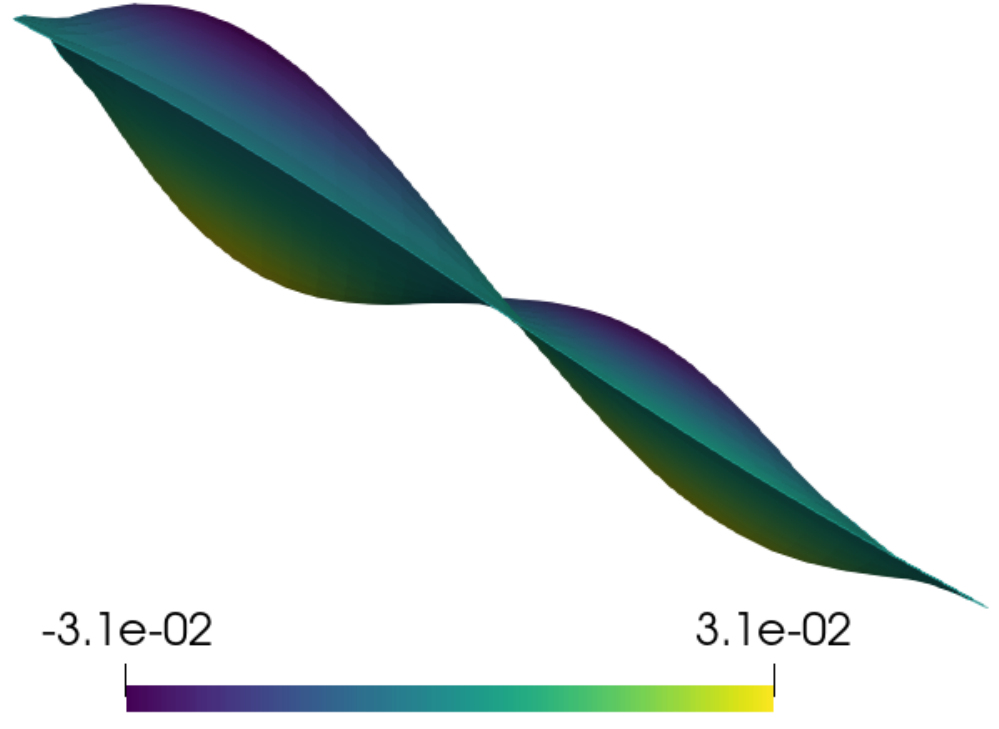} & \includegraphics[align=c, width = 0.25\textwidth]{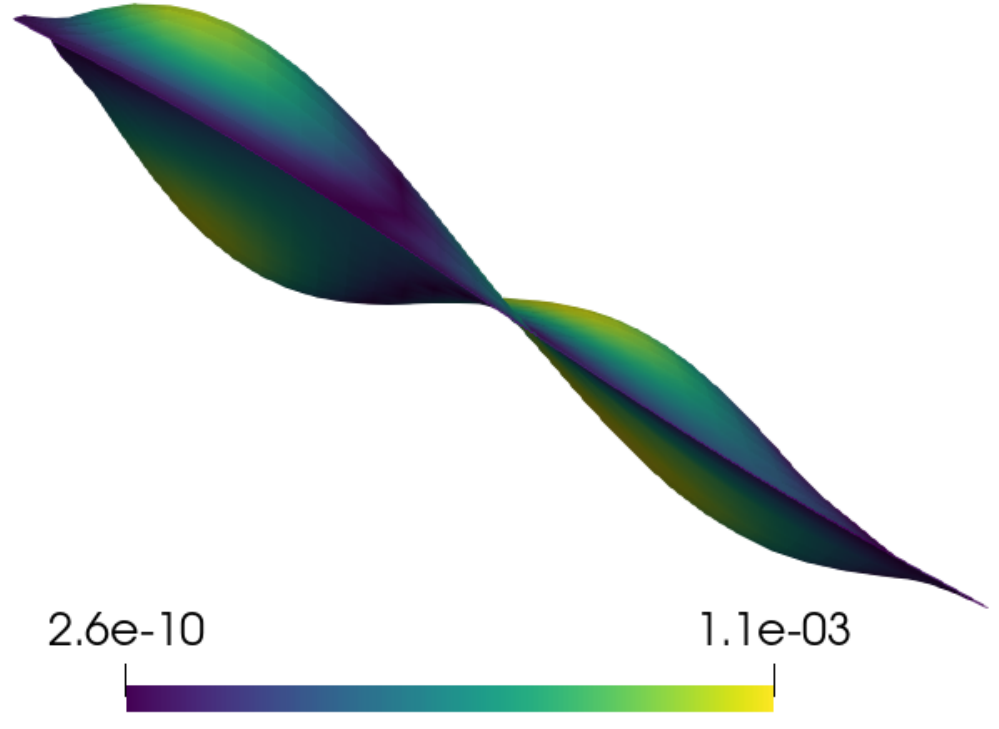} \\
        $\rho_p = 1e-1$ & \includegraphics[align=c, width = 0.25\textwidth]{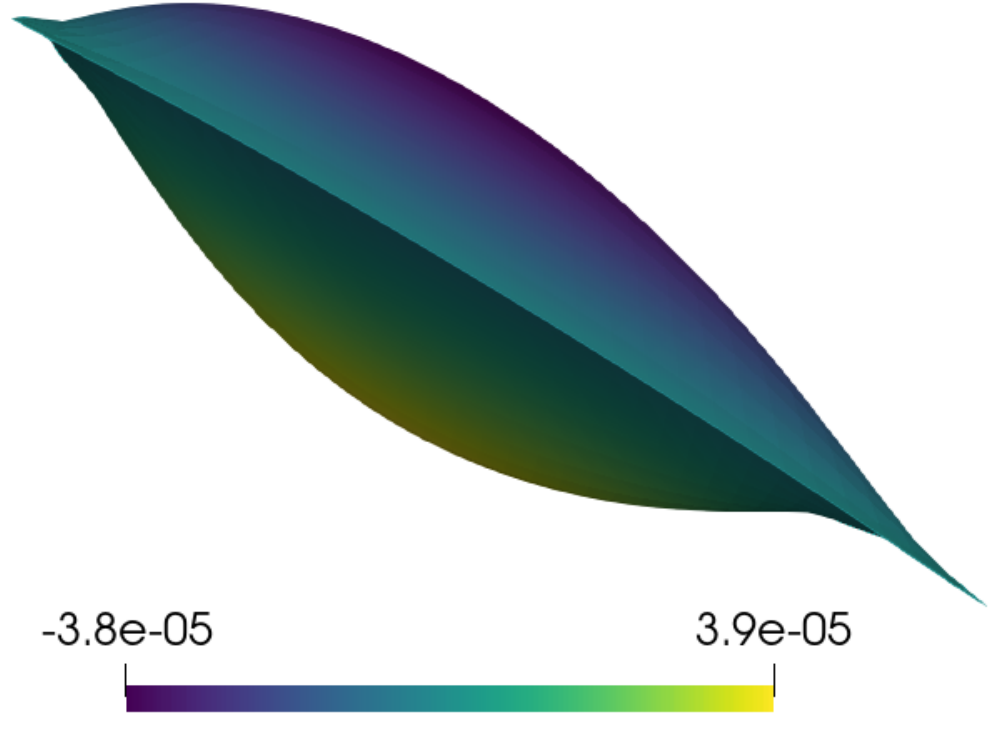} & \includegraphics[align=c, width = 0.25\textwidth]{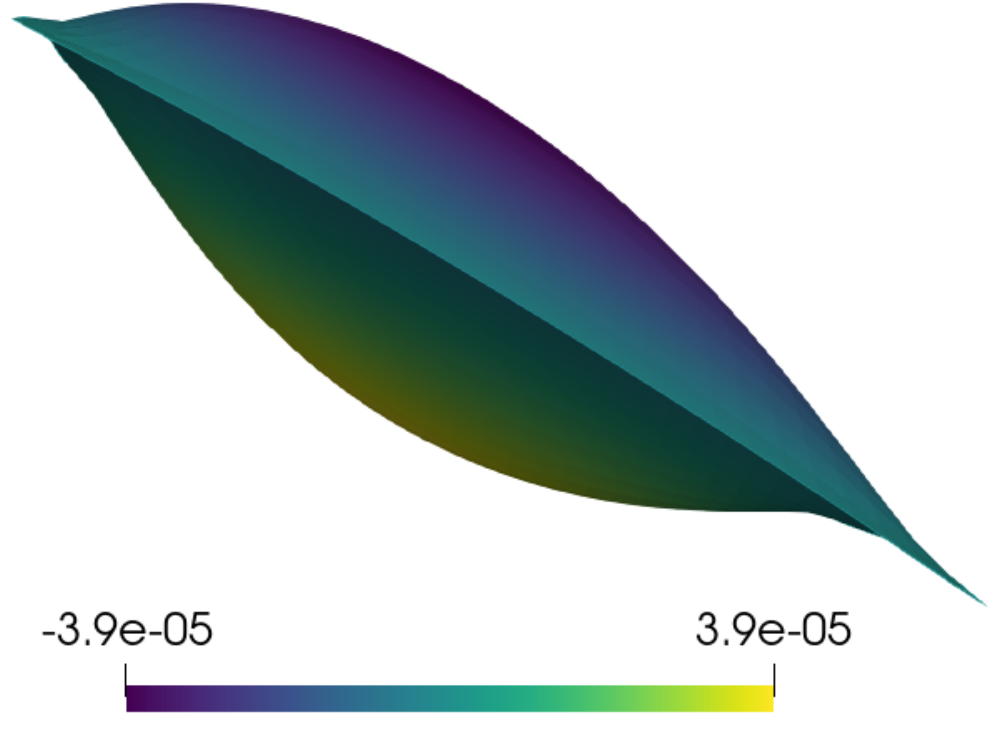} & \includegraphics[align=c, width = 0.25\textwidth]{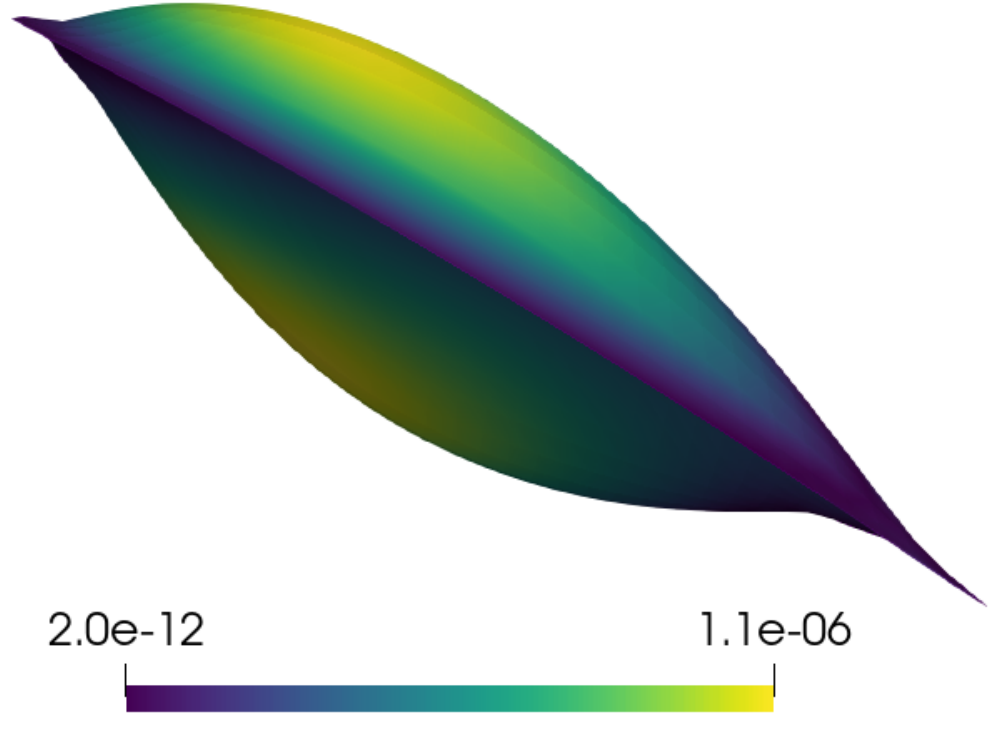} \\
        $\rho_p = 1e-2$ & \includegraphics[align=c, width = 0.25\textwidth]{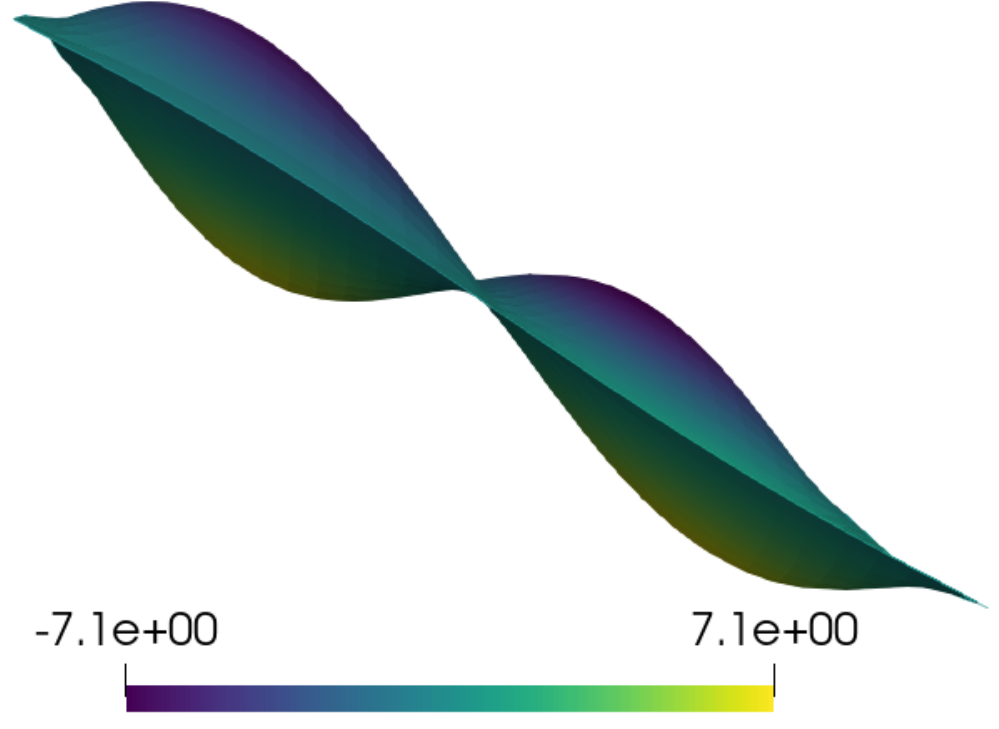} & \includegraphics[align=c, width = 0.25\textwidth]{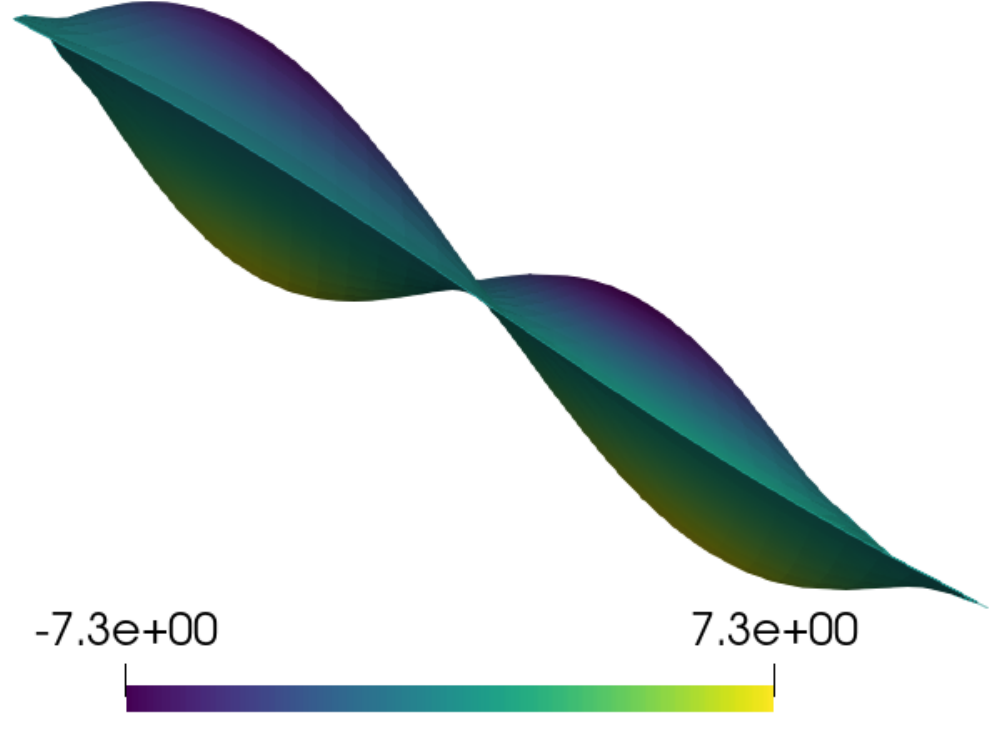} & \includegraphics[align=c, width = 0.25\textwidth]{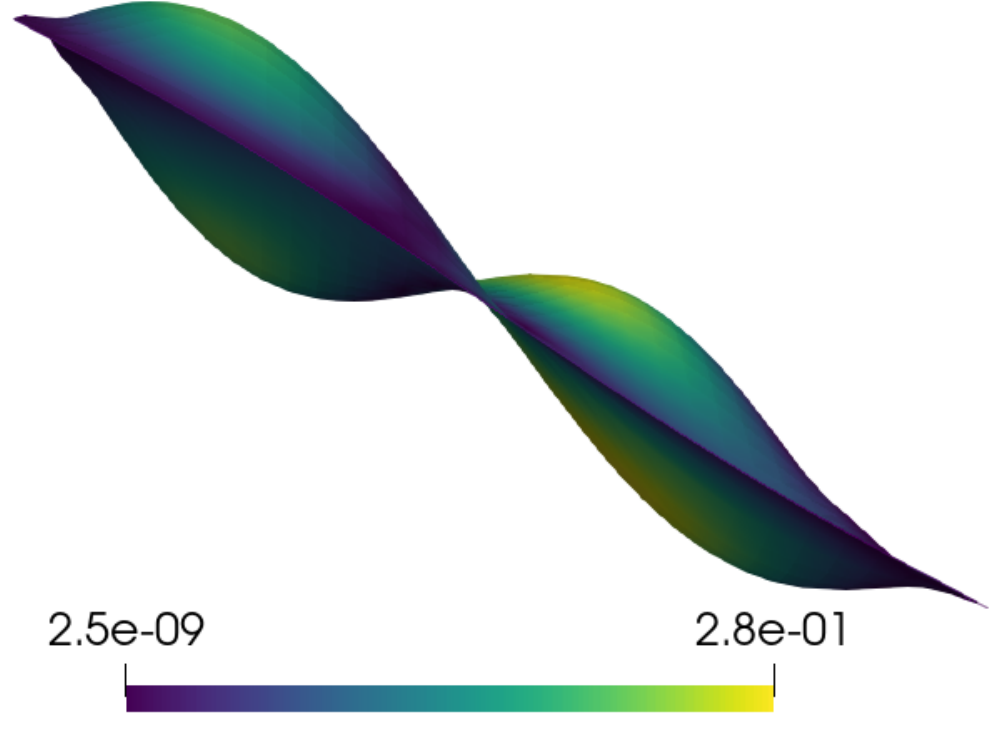} \\
        $\rho_p = 1e-3$ & \includegraphics[align=c, width = 0.25\textwidth]{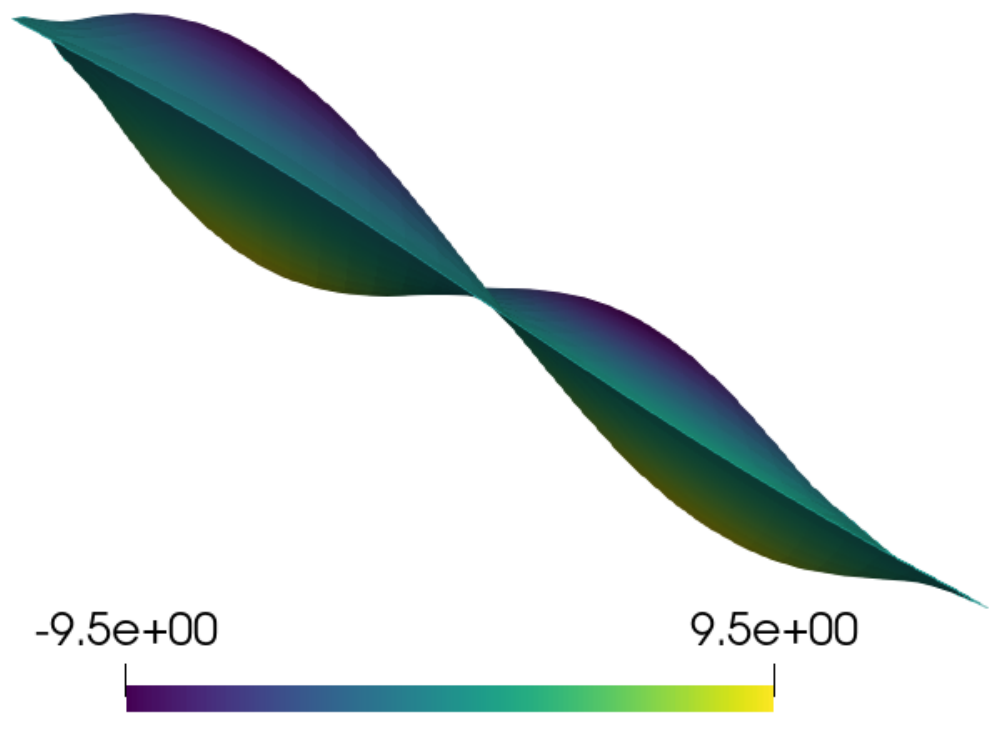} & \includegraphics[align=c, width = 0.25\textwidth]{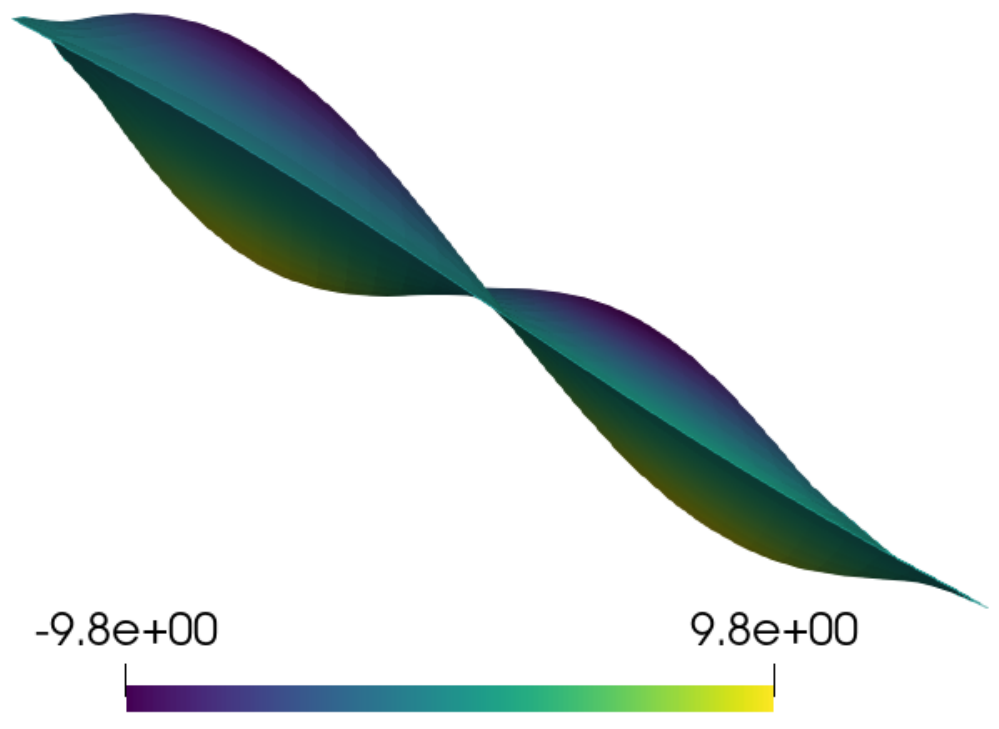} & \includegraphics[align=c, width = 0.25\textwidth]{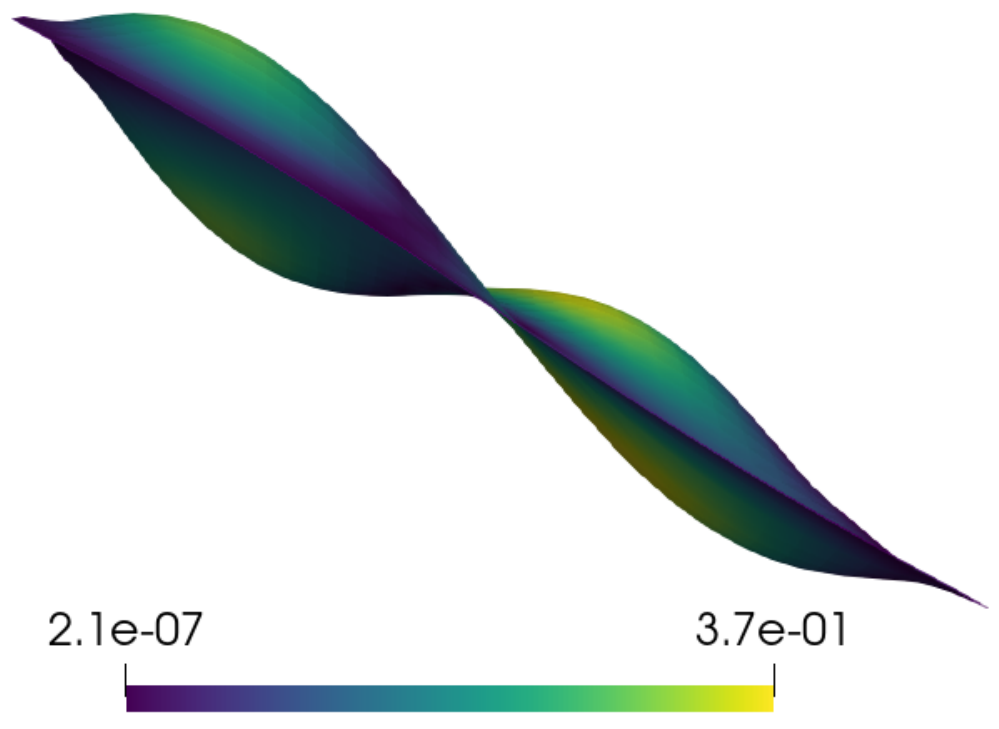} \\
    \end{tabular}
    \caption{Free vibrating plate: plate velocity $\dot{w}$ (left), normal component of the fluid velocity $\bm{u}$ at the plate $\Omega_p$ (center), and their absolute difference (right) at $t = 0.1$ across different values of $\rho_p = 1, 1e-1, 1e-2, 1e-3$ with $\rho_f = 1$ fixed.}
    \label{fig:exp2_wdot_u3}
\end{figure}

We report in Fig.~\ref{fig:test2_energy_err} the evolution of the energy defined by
\begin{equation*}
    E_\text{total} \coloneqq \frac{\rho_f}{2}\int_{\Omega_f} \abs{\bm{u}}^2\,d\Omega_f + \frac{\rho_p}{2}\int_{\Omega_p} \abs{\dot{w}}^2\,d\Omega_p + \frac{\rho}{2}\int_{\Omega_p} \abs{\nabla \dot{w}}^2\,d\Omega_p + \frac{1}{2}\int_{\Omega_p}\abs{z}^2\,d\Omega_p,
\end{equation*}
together with the relative interface mismatch we define by
\begin{equation*}
    \frac{\|\bm{u}(t)\cdot\bm{n} - \dot{w}(t)\|_{\Omega_p}}{\|\dot{w}(t)\|_{\Omega_p}}.
\end{equation*}
As expected, the energy decreases monotonically due to viscous dissipation in the fluid. Moreover, the rate of decay depends strongly on the plate density, with lighter plates transferring energy more rapidly to the surrounding fluid. The relative interface mismatch remains small, i.e., within 3\% to 6\%, throughout the simulation for all density ratios considered, further confirming that the interface conditions are accurately maintained during the evolution.

\begin{figure}
    \centering
    \begin{subfigure}{0.48\linewidth}
        \centering
        \includegraphics[width = \linewidth]{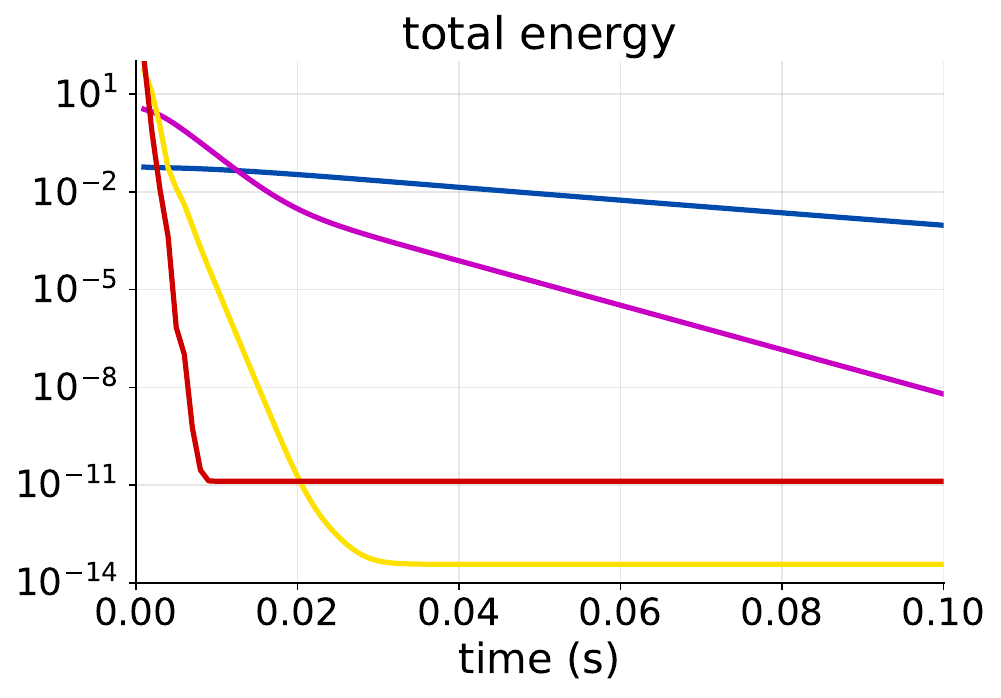}
    \end{subfigure}
    \begin{subfigure}{0.48\linewidth}
        \centering
        \includegraphics[width = \linewidth]{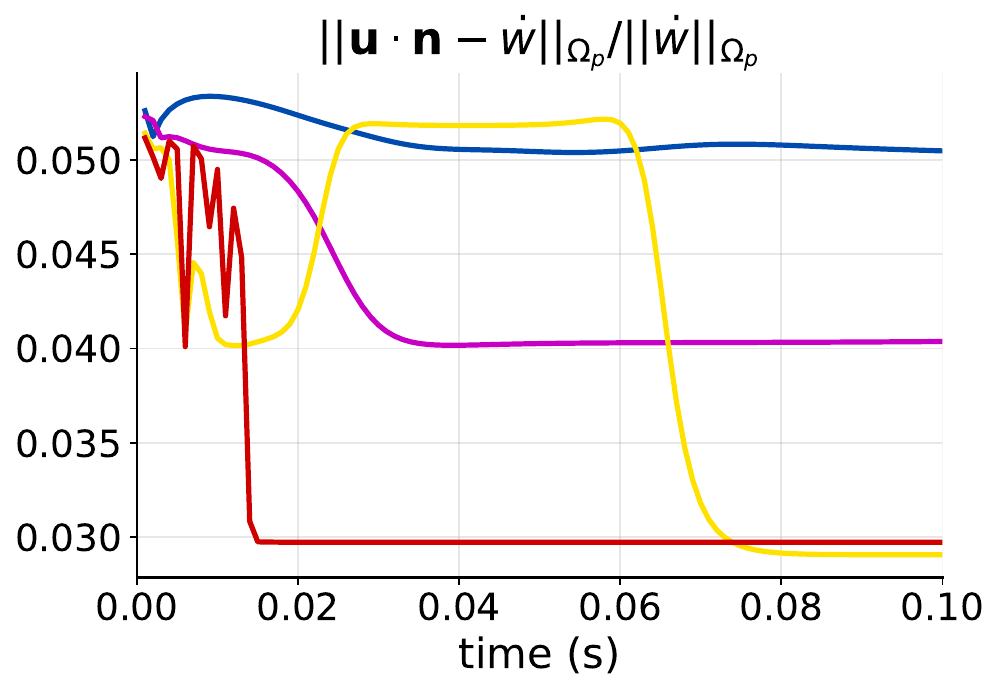}
    \end{subfigure}
    \begin{subfigure}{0.7\linewidth}
        \centering
        \includegraphics[width = \linewidth]{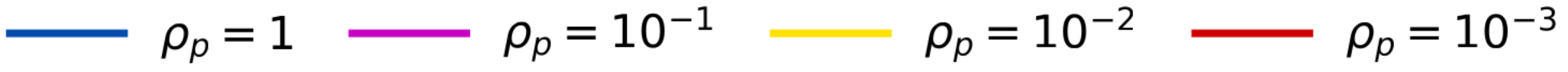}
    \end{subfigure}
    \caption{Free vibrating plate: total mechanical energy (left) and relative interface residual (right) for different values of the plate density $\rho_p$.}
    \label{fig:test2_energy_err}
\end{figure}

Lastly, Fig.~\ref{fig:test2_gmres} compares the average GMRES iteration counts required to solve the Schur complement system for different plate densities. As the plate density decreases, the coupled problem enters a progressively stronger added-mass regime, which is well known to possess difficulties for partitioned fluid–structure algorithms \cite{Causin-Nobile2005}. Nevertheless, the proposed interface preconditioner maintains nearly constant iteration counts over the entire range of density ratios considered. In contrast, the block Jacobi preconditioner consistently requires substantially more iterations, while the unpreconditioned Schur complement system remains significantly more expensive to solve. These results indicate that the proposed preconditioner is largely insensitive to the strength of the fluid–structure coupling and effectively mitigates the deterioration in convergence typically associated with increasingly severe added-mass effects.

\begin{figure}
    \centering
    \includegraphics[width=0.5\linewidth]{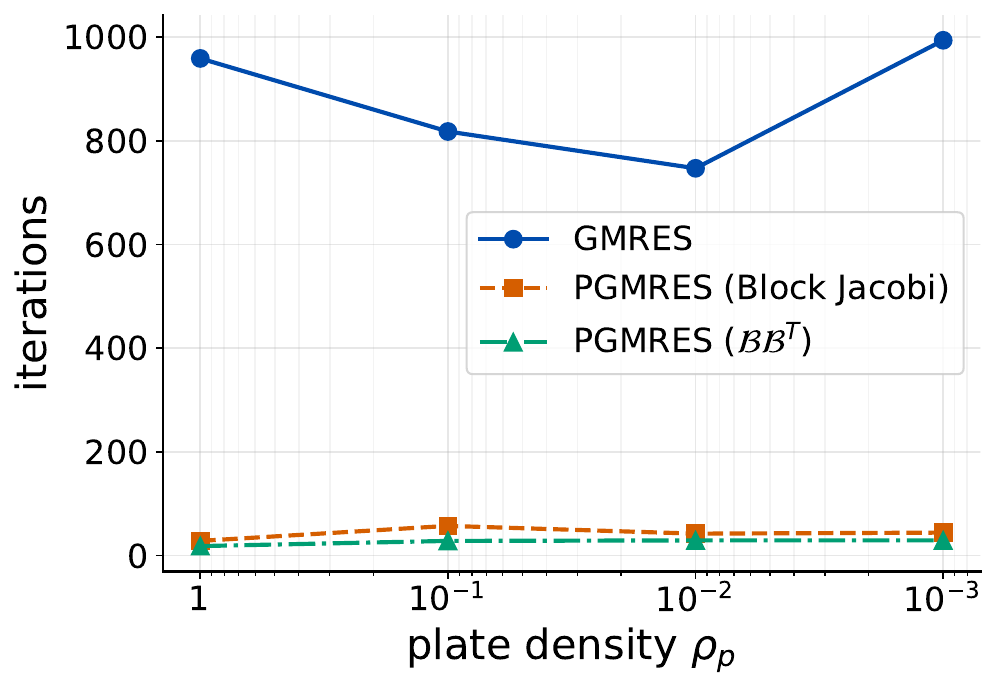}
    \caption{Free vibrating plate: Average GMRES/PGMRES iterations to solve \eqref{eq:schur_eq} without a preconditioner, with }
    \label{fig:test2_gmres}
\end{figure}

\section{Conclusions} \label{sec:conclusions}

We have presented a Schur complement domain decomposition method for a three-dimensional fluid–two-dimensional plate interaction problem based on a mixed finite element formulation with Lagrange multipliers. The proposed formulation decouples the fluid and structural subproblems while preserving the strong interface coupling and naturally admits a matrix-free implementation. We analyzed the conditioning of the Schur complement system matrix, identified the interface operator as the dominant source of its ill-conditioning, and proposed an interface-based preconditioner with a corresponding theoretical conditioning estimate. Numerical experiments confirmed the predicted convergence rates and demonstrated that the proposed preconditioner consistently outperforms both the unpreconditioned Schur complement formulation and a block Jacobi preconditioner in terms of conditioning and GMRES iteration counts, while remaining robust for challenging added-mass regimes. These results indicate that the proposed approach provides an efficient and mathematically justified framework for partitioned simulation of three-dimensional fluid–plate interaction problems.

\section*{Acknowledgements} 
{Hyesuk Lee was partially supported by the NSF under grant numbers DMS-2207971 and DMS-2513073.} {This research used in part resources on the Palmetto Cluster at Clemson University under National Science Foundation awards MRI 1228312, II NEW 1405767, MRI 1725573, and MRI 2018069.}

\bibliography{FSI} 

\end{document}